\numberwithin{equation}{section}
\begin{document}

\newtheorem{theorem}{Theorem}[section] 
\newtheorem{proposition}[theorem]{Proposition}
\newtheorem{corollary}[theorem]{Corollary}
\newtheorem{lemma}[theorem]{Lemma}

\theoremstyle{definition}
\newtheorem{assumption}[theorem]{Assumption}
\newtheorem{definition}[theorem]{Definition}

\theoremstyle{definition} 
\newtheorem{remark}[theorem]{Remark}
\newtheorem{remarks}[theorem]{Remarks}
\newtheorem{example}[theorem]{Example}
\newtheorem{examples}[theorem]{Examples}
\newenvironment{pf}%
{\begin{sloppypar}\noindent{\bf Proof.}}%
{\hspace*{\fill}$\square$\vspace{6mm}\end{sloppypar}}
\def\bA{{\bm A}}
\def\bB{{\bm B}}
\def\mE{{\mathbb E}}
\def\mK{{\mathbb K}}
\def\hmE{{\widehat{\mathbb E}}}
\def\mEp{{\mathbb E}_{\phi}}
\def\mFp{{\mathbb F}_{\phi}}
\def\mEpp{{\mathbb E}_{\phi,\mP}}
\def\tmEp{\widetilde{\mathbb E}_{\phi}}
\def\tmEpp{\widetilde{\mathbb E}_{\phi,P}}
\def\mFpp{{\mathbb F}_{\phi,\mP}}
\def\tPhi{\widetilde{\Phi}}
\def\mF{{\mathbb F}}
\def\mG{{\mathbb G}}
\def\mX{{\mathbb X}}
\def\mP{{\mathbb P}}
\def\db{\|}
\def\r{Nr}
\def\R{{\mathbb R}}
\def\N{{\mathbb N}}
\def\C{{\mathbb C}}
\def\Q{{\mathbb Q}}
\def\mP{{\mathbb P}}
\def\Z{{\mathbb Z}}
\def\mH{\mathbb H}
\def\mA{\mathbb A}
\def\mT{\mathbb T}
\def\D{{\mathcal D}}
\def\cB{{\mathcal B}}
\def\E{{\mathcal E}}
\def\cF{{\mathcal F}}
\def\cA{{\mathcal A}}
\def\cH{{\mathcal H}}
\def\G{{\mathcal G}}
\def\B{{\mathcal B}}
\def\I{{\mathcal I}}
\def\M{{\mathcal M}}
\def\O{{\mathcal O}}
\def\S{{\mathcal S}}
\def\cT{{\mathcal T}}
\def\cP{{\mathcal P}}
\def\L{{\mathcal L}}
\def\cK{{\mathcal K}}
\def\cJ{{\mathcal J}}
\def\cS{{\mathcal S}}
\def\bH{{\bf H}}
\def\bP{{\bf P}}
\def\bQ{{\bf Q}}
\def\bE{{\bf E}}
\def\bT{{\bf T}}
\def\W{W}
\def\Be{L_\infty}
\def\cR{{\mathcal R}}
\def\eps{\varepsilon}
\def\3{{\ss}}
\def\slim{s-\lim_}
\def\capa{{\mathrm{Cap}}}
\def\supp{{\mathrm{supp}}}
\def\esssup{{\mathrm{ess\,sup}}}
\def\absconv{{\mathrm{absconv}}}
\def\dom{{\mathrm{dom}}}
\def\loc{\mathrm{loc}}
\def\hs{half-space}
\def\HIC{$\HH^\infty$-calculus}
\def\BIP{{\mathrm{BIP}}}                                                 
\def\BUC{{\mathrm{BUC}}}
\def\BC{{\mathrm{BC}}}
\def\MR{{\mathcal{MR}}}
\def\const{{\mathrm{const\,}}}
\def\Re{{\mathrm{Re}}}
\def\re{{\mathrm{Re}}}
\def\Im{{\mathrm{Im}}}
\def\im{{\mathrm{Im}}}
\def\dd{{\mathrm d}}
\def\e{{\mathrm{e}}}
\def\id{{\mathrm{id}}}
\def\sb{{\mathrm{sb}}}
\def\FM{{\mathrm{FM}}}
\def\ME{{\mathrm{M}}}
\def\hperp{{^{_\perp}}}
\def\HIC{{$H^\infty$-calculus}}
\def\hW{\widehat{W}}
\def\hu{\hat{u}}
\def\hv{\hat{v}}
\def\hw{\hat{w}}
\def\hsigma{\hat{\sigma}}
\def\hf{\hat{f}}
\def\hh{\hat{h}}
\def\hg{\hat{g}}
\def\dR{\dot{\R}}
\def\tu{\tilde{u}}
\def\tc{\tilde{c}}
\def\tp{\tilde{p}}
\def\tf{\tilde{f}}
\def\th{\tilde{h}}
\def\tg{\tilde{g}}
\def\tv{\tilde{v}}
\def\ta{\tilde{a}}
\def\ty{\widetilde{y}}
\def\bv{\bar{v}}
\def\bw{\bar{w}}
\def\tsigma{\tilde{\sigma}}
\def\hphi{\hat{\phi}}
\newcommand{\essinf}[1]{{\mathrm{ess}}\!\inf_{\!\!\!\!\!\!\!\!\! #1}}
\newcommand{\fn}{\footnote}
\def\mdt{mixed derivative theorem}
\def\div{{\mathrm {div\,}}}
\def\bsigma{\bar{\sigma}}
\def\brho{\bar{\rho}}
\def\rcv{\W^1_p(J;L^p(\R^{n+1}_+))
        \cap L_p(J;\W^2_p(\R^{n+1}_+))}
\def\rcvtp{\W^1_p(J;L^p(\dR^{n+1}))
        \cap L_p(J;\W^2_p(\dR^{n+1}))}
\def\rcs{W^{3/2-1/2p}_p(J;L_p(\R^n))
        \cap \W^1_p(J;W^{1-1/p}_p(\R^n))
        \cap L_p(J;W^{2-1/p}_p(\R^n))}
\def\rcf{L_p(J;L_p(\R^{n+1}_+))}
\def\rcftp{L_p(J;L_p(\dR^{n+1}))}
\def\rch{W^{1/2-1/2p}_p(J;L_p(\R^n))
        \cap L_p(J;W^{1-1/p}_p(\R^n))}
\def\rcvi{W^{2-2/p}_p(\R^{n+1}_+)}
\def\rcvitp{W^{2-2/p}_p(\dR^{n+1})}
\def\rcsi{W^{2-2/p}_p(\R^n)}
\newcommand{\ab}{&\hskip-2mm}

\def\en{{\talloblong}}
\def\hookd{\stackrel{_d}{\hookrightarrow}}
\def\hook{{\hookrightarrow}}
\def\vt{{\vartheta}}
\def\ovt{{\overline{\vartheta}}}
\def\THE{tornado-hurricane equations}
\def\THO{tornado-hurricane operator}
\def\NSE{Navier-Stokes equations}
\def\SO{Stokes operator}
\def\HHP{Helmholtz projection}
\def\HHD{Helmholtz decomposition}
\def\HOL{\mathrm{HOL}}
\def\la{{\langle}}
\def\ra{{\rangle}}
\def\vphi{{\varphi}}
\def\vdp{{\{k:\ \alpha^v_k=0\}}}
\def\vndp{{\{k:\ \alpha^v_k\neq0\}}}
\def\tdp{{\{k:\ \alpha^\vt_k=0\}}}
\def\tndp{{\{k:\ \alpha^\vt_k\neq0\}}}
\def\sD{{\mathscr D}}
\def\hsD{{\widehat{\mathscr D}}}
\def\sL{{\mathscr L}}
\def\sR{{\mathscr R}}
\def\sT{{\mathscr T}}
\def\sLis{{\mathscr L}_{is}}
\def\PPr{{\PP_{\!\rho}}}
\def\ou{{\overline{u}}}
\def\oq{{\overline{q}}}
\def\oU{{\overline{U}}}
\def\th{{T\!H}}
\def\ttau{{\tilde \tau}}
\def\hH{{\widehat{H}}}
\def\cD{{\mathcal D}}
\def\vp{{\varphi}}
\def\Hic{{\mathcal{H}^\infty}}

\hyphenation{Lipschitz}


\sloppy
\title[Turbulence in active fluids]
{Turbulence in active fluids caused by self-propulsion}

\author[C. Bui]{Christiane Bui}
\address{Mathematisches Institut, Angewandte Analysis\\
         Heinrich-Heine-Uni\-ver\-sit\"at D\"usseldorf\\
	          40204 D\"usseldorf, Germany}
		  \email{Christiane.Bui@uni-duesseldorf.de}

		  \author[H. L\"owen]{Hartmut L\"owen}
		  \address{Institut f\"ur Theoretische Physik II - Soft
		  Matter\\
		           Heinrich-Heine-Uni\-ver\-sit\"at
			   D\"usseldorf\\
			            Universit\"atsstra{\ss}e 1\\
				             40225 D\"usseldorf,
					     Germany}
					     \email{hlowen@thphy.uni-duesseldorf.de}

					     \author[J. Saal]{J\"urgen
					     Saal}
					     \address{Mathematisches
					     Institut, Angewandte
					     Analysis\\
					              Heinrich-Heine-Uni\-ver\-sit\"at
						      D\"usseldorf\\
						               40204
							       D\"usseldorf,
							       Germany}
							       \email{juergen.saal@hhu.de}

\parskip0.5ex plus 0.5ex minus 0.5ex

\begin{abstract}
A rigoros analytical justification of turbulence 
observed in active fluids and caused by self-propulsion is presented.
We prove exis\-tence of unstable wave modes for the generalized
Stokes and Navier-Stokes systems by developing an approach in
spaces of Fourier transformed Radon measures.
\end{abstract}
\maketitle

{\bf Keywords.} Active fluids, turbulence, generalized Navier-Stokes
equations, well-posedness, stability 
 


\section{Introduction}

In this brief note we study analytical properties of the 
following minimal hydrodynamic model to describe the bacterial velocity in
the case of highly concentrated bacterial suspensions with negligible density
fluctuations considered on the domain $(0,\infty)\times\R^n$:
\begin{equation}
	\label{eqn:min-hyd-mod}
	\begin{array}{rl}
		v_t+\lambda_0v\cdot\nabla v & =  f-\nabla
		p+\lambda_1\nabla|v|^2-(\alpha+\beta|v|^2)v+\Gamma_0\Delta
		v-\Gamma_2 \Delta^2v,\\
		\mbox{div}\,v & =  0,\\
		v(0)&=v_0.
	\end{array}
\end{equation}
Here $v$ is the bacterial velocity field and
$p$ the (scalar) pressure.
For $\lambda_0=1$, $\lambda_1 = \alpha=\beta = \Gamma_2 = 0$ and $\Gamma_0 >0$, 
the model reduces to the incompressible Navier-Stokes equations in $n$
spatial dimensions. For non-vanishing $\lambda_1,\alpha,\beta,\Gamma_2$ system
(\ref{eqn:min-hyd-mod}) serves as a model to describe 
occuring turbulence in low Reynolds regimes caused by self-propulsion.
It was originally proposed by 
Wensink et al.\ in \cite{Wensink-et-al:Meso-scale-turbulence} 
and then considered in Refs.\ \cite{Dunkel_Heidenreich_PRL_2013,Dunkel_NJP_2013}
and is by now one of the standard models to describe active turbulence at low Reynolds number \cite{Oza}.
The model was recently derived from more microscopic descriptions \cite{Klapp} and was quantitatively confirmed in
suspensions of living biological systems \cite{Wensink-et-al:Meso-scale-turbulence,Kaiser1,Dogic,Beppu} and synthetic microswimmers 
\cite{Sagues}. Last not least, active turbulence was also suggested as a power source for various microfluidic applications 
\cite{Kaiser1,Kaiser2,Kaiser3,Thampi}. 
We refer to those papers and to \cite{zls2016} for
a more detailed description of the physics behind the additional occuring
terms. 

In \cite{zls2016} an analytical approach to (\ref{eqn:min-hyd-mod})
in $L^2(\R^n)$ is presented. 
The aim of this note is prove well-posedness and significant results 
on stability and instability (turbulence)  
in the $\FM(\R^n)$-setting, i.e., in spaces of Fourier transformed Radon
measures. The purpose is to mathematically confirm the asymptotic
behavior observed in simulations and experiments as well as 
the following 'formal' linear stability analysis given in
\cite{Wensink-et-al:Meso-scale-turbulence}:
For $p_0\in\R$ consider the steady state $(0,p_0)$ of (\ref{eqn:min-hyd-mod})
corresponding to a {\em disordered isotropic state}
(see (\ref{disorderedstate})). 
Plugging the wave ansatz
\begin{equation}\label{wavean}
	(v,p):=(0,p_0)+(\eps,\eta)\exp(ik\cdot x+\sigma t),
	\quad k\neq 0,\ x\in\R^n,\ t\ge 0,\ \sigma\in\R, 
\end{equation}
with small $\eps,\eta$
into system (\ref{eqn:min-hyd-mod}) and neglecting the nonlinear terms
yields the characteristic spectral values
\[
	\sigma(k)=-(\alpha+\Gamma_0k^2+\Gamma_2k^4).
\]
Thus unstable (turbulent) modes are expected to exist for
$\Gamma_0 < 0$ and 
    $4\alpha < \Gamma_0^2/\Gamma_2$, or for
    $\Gamma_0 \geq 0$ and $\alpha < 0$.
A similar formal argument leads to stable and unstable modes
for a manifold of ordered polar states (see also the discussion before
Proposition~\ref{orderedstate_lin_stability}).

In \cite{zls2016} 
precise and rigoros results on linear
and nonlinear stability and instability in the $L^2(\R^n)$-setting are given, 
depending on the values of the involved parameters. 
This, however, does not rigorously confirm the formal stability analysis 
above just by the fact that the wave ansatz (\ref{wavean}) is not an
$L^2(\R^n)$-function.
(As it is well known, changing the space, i.e.\ the functional setting, 
in general changes the spectrum,
the growth bound and their relation.) 
On the other hand, it is easy to see that $(v,p)$ as given in
(\ref{wavean}) can be regarded as a Fourier transformed Radon measure,
that is, it belongs to the space $\FM(\R^n)$ (see Remark~\ref{wainfm}). 
In this note we derive precise and rigoros results on linear
and nonlinear stability and instability in the $\FM(\R^n)$-setting 
which justifies the formal argument based on the 
wave modes (\ref{wavean}).

Note that in the context of evolution equations 
the formal stability ana\-ly\-sis given above based on wave modes
of the form (\ref{wavean}) is standard 
in applied literature. The approach in $\FM$-spaces to confirm this
argument in unbounded domains such as $\R^n$, half-spaces or layers 
is developed in \cite{GS13}. It is, e.g., also succesfully applied to confirm
stability of the Ekman spiral for low Reynolds numbers in \cite{gs2012} 
and instability of the Ekman spiral for high Reynolds numbers 
in \cite{fisa2011}.

We organized this note as follows.
In Section~\ref{sec_frm} we briefly recall basic facts on
the space $\FM$. The main part Section~\ref{subsec_wp}
is divided in several subsections. 
In Subsection~\ref{sec_lin} we give precise
information on linear (in-) stability of the steady states depending on
the values of the involved parameters.
In Subsection~\ref{subsec_nlwp} we prove 
well-posedness for the 
generalized Navier-Stokes equations (\ref{eqn:min-hyd-mod}) in 
the $\FM$-setting. 
In fact, we prove existence of a unique maximal strong solution
for arbitrary data and existence of a unique global mild solution 
for small data.
In Subsection~\ref{subsec_stab} we transfer most of the 
results on linear (in-) stability to the nonlinear 
system (\ref{eqn:min-hyd-mod}).

\section{The space of Fourier transformed Radon measures}\label{sec_frm}

We start with basic notation.
For a domain $\Omega\subset \R^n$
and a Banach space $X$
in the sequel $L^p(\Omega,X)$, $1\le p\le\infty$, 
denotes the standard Bochner-Lebesgue space with norm 
\[
	\|u\|_{L^p(X)}=\left(\int_\Omega\|u(x)\|_X^p\,dx\right)^{1/p},
\]
if $1\le p<\infty$ and
$\|u\|_{L^\infty(X)}=\esssup_{x\in\Omega}\|u(x)\|_X$ in case that
$p=\infty$. 
The space of bounded and continuous
functions is denoted $\BC(\Omega,X)$ and we write
$\BUC(\Omega,X)$, if the functions are additionally uniformly continuous. 
As usual, $C^\infty_c(\Omega,X)$ stands for the space of smooth
compactly supported functions.

The symbol $W^{k,p}(\Omega,X)$, $k\in\N_0$, $1\le p\le\infty$, denotes
the standard Sobolev space of $k$-times differentiable functions in
$L^p(\R^n,X)$. Its norm is given as
\[
	\|f\|_{W^{k,p}(X)}:=\biggl(\sum_{|\alpha|\le k}
	\|\partial^\alpha f\|_{L^p(X)}^p\biggr)^{1/p}
\]
with the usual modification if $p=\infty$. 
The class of all bounded and 
linear operators from the space $X$ into the space $Y$
we denote by $\sL(X,Y)$, 
where we write $\sL(X)$ if $X=Y$, and 
$\sigma(A)$ denotes the spectrum of a linear operator
$A:D(A)\subset X\to X$. 

We outline properties of the space of Fourier transformed 
Radon measures $\FM(\R^n)$. For a comprehensive 
and detailed introduction we refer to \cite{GS13}.
\begin{definition}
Let $\mathscr{A}$ be a $\sigma$-Algebra over $\R^n$,
$\mK \in \{\R,\C\}$, and let $\mK^m$ be equipped with the euclidian 
norm $|\cdot|$. A set map $\mu : \mathscr{A} \to \mK^m$ is called a 
finite vector valued Radon measure if
\begin{enumerate}
    \item $\mu$ is a $\mK^m$-valued measure, that is, 
    if $\mu(\emptyset) = 0$ and $\mu$ is $\sigma$-additive;
    \item the variation of $\mu$ defined as
        \begin{equation*}
            \vert \mu \vert (\mathcal{O}) := 
            \sup \left \{ \sum_{E \in \Pi (\mathcal{O})} \vert 
            \mu(E) \vert: \: \Pi(\mathcal{O}) \subseteq 
            \mathscr{A} \: \text{finite decomposition of} \: 
            \mathcal{O} \right \}
        \end{equation*}
    for $\O \in \mathscr{A}$ is a finite Radon measure 
    (that is if $|\mu|(\R^n) < \infty$ and $|\mu|$ is a Borel regular measure).
\end{enumerate}
We denote by $\mathrm{M}(\R^n)=\mathrm{M}(\R^n,\mK^m)$ the space of 
finite vector valued Radon measures.
\end{definition}
From \cite{GS13} we know that $\mathrm{M}(\R^n)$ equipped with the norm 
$\|\mu\|_{\ME(\R^n)} := \|\mu\|_\ME := |\mu|(\R^n)$ is a 
Banach space. Let $\mathscr{B}$ be the Borel $\sigma$-algebra.
Since $\mK^m$ has the Radon-Nikod\'{y}m property
there exists a $\nu_{\mu} \in L^1(\mathbb{R}^n, \vert \mu \vert)$ 
such that we have
$
    \mu(\mathcal{O}) = \int_{\mathcal{O}} \nu_{\mu} 
    \: \text{d}|\mu|$ for $\mathcal{O} \in \mathscr{B}
$.
For $\psi \in \BC(\R^n, \mK^{m\times \ell})$ we set
\begin{equation*}
    \mu \lfloor \psi(\mathcal{O}) := \int_{\mathcal{O}} 
    \psi \, \nu_{\mu} \: \text{d} \vert \mu \vert
    \qquad (\mathcal{O} \in \mathscr{B}),
\end{equation*}
which is well-defined since 
$\mathscr{B} \subset \mathscr{A}$. Elementary properties are listed in
\begin{lemma}\label{fm_lem1}
Let $\mK \in \{\R,\C\}$, $n,m,\ell,j\in\N$,
$\phi \in \BC(\R^n, \mK^{\ell\times j})$, and 
$\psi\in \BC(\R^n, \mK^{m\times \ell})$. Then we have
    \begin{enumerate}
        \item $|\mu \lfloor \psi| \leq |\mu| \lfloor |\psi|$,
        \quad \rm{(2)}\, $\mu \lfloor \psi \in \ME(\R^n)$,
        \quad (3)\, $(\mu \lfloor \psi) \lfloor \phi = \mu \lfloor (\phi \psi)$.
    \end{enumerate}
\end{lemma}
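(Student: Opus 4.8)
The plan is to verify the three assertions directly from the definition of $\mu\lfloor\psi$ via the Radon-Nikod\'ym density $\nu_\mu$. Recall that for $\mathcal{O}\in\mathscr{B}$ we have $(\mu\lfloor\psi)(\mathcal{O})=\int_{\mathcal{O}}\psi\,\nu_\mu\,\dd|\mu|$, where $\nu_\mu\in L^1(\R^n,|\mu|)$ satisfies $\mu(\mathcal{O})=\int_{\mathcal{O}}\nu_\mu\,\dd|\mu|$; moreover $|\nu_\mu|=1$ $|\mu|$-a.e.\ (a standard consequence of the polar decomposition of a vector measure). For (1), I would start from the well-known characterization of the variation of a measure with density: for any finite decomposition $\Pi(\mathcal{O})$ and any $E\in\Pi(\mathcal{O})$,
\[
   |(\mu\lfloor\psi)(E)|=\Bigl|\int_E\psi\,\nu_\mu\,\dd|\mu|\Bigr|
     \le\int_E|\psi\,\nu_\mu|\,\dd|\mu|\le\int_E|\psi|\,\dd|\mu|
     =(|\mu|\lfloor|\psi|)(E),
\]
using $|\psi\,\nu_\mu|\le|\psi|\,|\nu_\mu|=|\psi|$ (matrix operator norm times a unit vector). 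Summing over $E\in\Pi(\mathcal{O})$ and noting that $|\mu|\lfloor|\psi|$ is additive gives $\sum_{E}|(\mu\lfloor\psi)(E)|\le(|\mu|\lfloor|\psi|)(\mathcal{O})$; taking the supremum over $\Pi(\mathcal{O})$ yields $|\mu\lfloor\psi|(\mathcal{O})\le(|\mu|\lfloor|\psi|)(\mathcal{O})$, which is (1).

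For (2), I would note that $\mu\lfloor\psi$ is $\sigma$-additive because $\psi\,\nu_\mu\in L^1(\R^n,|\mu|)$ (indeed $\int|\psi\,\nu_\mu|\,\dd|\mu|\le\|\psi\|_\infty|\mu|(\R^n)<\infty$ since $\psi$ is bounded), so the set function $\mathcal{O}\mapsto\int_{\mathcal{O}}\psi\,\nu_\mu\,\dd|\mu|$ inherits countable additivity from dominated convergence, and it vanishes on $\emptyset$. Finiteness of the variation is immediate from (1): $|\mu\lfloor\psi|(\R^n)\le(|\mu|\lfloor|\psi|)(\R^n)\le\|\psi\|_\infty|\mu|(\R^n)<\infty$. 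Borel regularity of $|\mu\lfloor\psi|$ follows from Borel regularity of $|\mu|$, since $|\mu\lfloor\psi|$ is dominated by (and absolutely continuous with respect to) $|\mu|$; alternatively one invokes the general fact cited from \cite{GS13} that $\ME(\R^n)$ is exactly the space of such measures and that a measure with an $L^1(|\mu|)$-density lies in it. Hence $\mu\lfloor\psi\in\ME(\R^n)$.

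For (3), the key observation is that the Radon-Nikod\'ym density of $\mu\lfloor\psi$ with respect to $|\mu|$ is $\psi\,\nu_\mu$, while its density with respect to $|\mu\lfloor\psi|$ differs from $\psi\,\nu_\mu$ only by normalization; it is cleanest to compute everything against the common reference measure $|\mu|$. For $\mathcal{O}\in\mathscr{B}$,
\[
   \bigl((\mu\lfloor\psi)\lfloor\phi\bigr)(\mathcal{O})
     =\int_{\mathcal{O}}\phi\,\nu_{\mu\lfloor\psi}\,\dd|\mu\lfloor\psi|
     =\int_{\mathcal{O}}\phi\,\psi\,\nu_\mu\,\dd|\mu|
     =\int_{\mathcal{O}}(\phi\psi)\,\nu_\mu\,\dd|\mu|
     =\bigl(\mu\lfloor(\phi\psi)\bigr)(\mathcal{O}),
\]
where the middle equality is precisely the identity $\nu_{\mu\lfloor\psi}\,\dd|\mu\lfloor\psi|=\psi\,\nu_\mu\,\dd|\mu|$ as (signed/vector) measures; this in turn follows from the uniqueness of the Radon-Nikod\'ym decomposition $\mu\lfloor\psi=\int\psi\,\nu_\mu\,\dd|\mu|$ established above, together with the abstract polar decomposition applied to $\mu\lfloor\psi$. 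I expect the only genuinely delicate point to be justifying this change-of-density identity rigorously — i.e.\ that one may freely substitute $\nu_{\mu\lfloor\psi}\,\dd|\mu\lfloor\psi|$ by $\psi\,\nu_\mu\,\dd|\mu|$ inside the integral defining the second $\lfloor$-operation — since the two variations $|\mu\lfloor\psi|$ and $|\mu|$ need not be mutually absolutely continuous (on the set $\{\psi=0\}$ the former can vanish while the latter does not). This is handled by checking the identity separately on $\{\psi=0\}$, where both sides are zero, and on $\{\psi\neq0\}$, where $|\mu\lfloor\psi|$ and $|\mu|$ are equivalent; all the remaining manipulations are routine.
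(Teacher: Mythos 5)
Your proof is correct, and it is the standard argument: the paper itself gives no proof of this lemma (it is quoted as a list of elementary properties from \cite{GS13}), and the route via the polar decomposition $|\nu_\mu|=1$ $|\mu|$-a.e.\ is exactly what is intended there. The only remark worth making concerns (3): the case distinction on $\{\psi=0\}$ versus $\{\psi\neq0\}$ is unnecessary, since the substitution $\nu_{\mu\lfloor\psi}\,\dd|\mu\lfloor\psi| \leadsto \psi\,\nu_\mu\,\dd|\mu|$ under the integral follows directly from the fact that the two expressions define the same finite vector measure on all Borel sets, hence give equal integrals first for simple and then, by uniform approximation (both total variations being finite), for all bounded Borel integrands such as the entries of $\phi$; no mutual absolute continuity of $|\mu\lfloor\psi|$ and $|\mu|$ is needed.
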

Next, we consider the closed subspace of $\ME(\R^n)$ consisting of measures 
with no point mass at the origin, i.e.,
\begin{equation*}
    \text{M}_{0}(\R^n) := \{ \mu \in \text{M}(\R^n) 
    \, : \, \mu(\{ 0 \}) = 0 \}.
\end{equation*}
We observe that
\[
    L^1(\R^n) \hookrightarrow \ME_0(\R^n) \hookrightarrow \ME(\R^n)
    \hookrightarrow \S'(\R^n).
\]
Hence, the Fourier transform of a Radon measure is defined and given as 
\begin{equation*}
    \hat{\mu} (\xi) = \mu \lfloor \phi_{\xi} (\R^n) \qquad
    \text{with} \qquad
    \phi_{\xi}(x) = (2 \pi)^{-\frac{n}{2}} \, e^{- ix \cdot \xi}.
\end{equation*}
Spaces of Fourier transformed Radon measures then are defined as
\begin{align*}
    \text{FM}(\R^n) &:= \{ \hat{\mu} : \mu \in \text{M}(\R^n) \}, \\
    \text{FM}_0(\R^n) &:= \{ \hat{\mu} : \mu \in \text{M}_0(\R^n) \},
\end{align*}
which are equipped with the norm 
$\|u\|_\FM := \|\cF^{-1}u\|_\ME = \|\cF u\|_\ME$. Both $\FM(\R^n)$ 
and $\FM_0(\R^n)$ are Banach spaces. Furthermore, we define
\[
    \FM^k (\R^n) := \left \{ u \in \FM(\R^n) : 
    \partial^{\alpha}u \in \FM(\R^n) \quad (|\alpha| \leq k) \right \}
\]
for $k \in \N$ and
\[
    \FM^s (\R^n) := \left \{ u \in \FM(\R^n) : (\xi\mapsto \hat{u} 
    \lfloor |\xi|^s) \in \ME(\R^n) \right \}
\]
for $s \ge 0$. The spaces $\FM_0^k(\R^n)$ for 
$k \in \N$ and $\FM_0^s(\R^n)$ for $s \ge 0$ are defined 
accordingly. Note that for $s \in \N$ the two definitions are consistent
thanks to Proposition~\ref{fmfouriermult} below.
From \cite{GS13} we recall the following useful facts.
\begin{lemma}\label{fm_lem2}
Let $u,v \in \FM(\R^n)$. Then we have
    \begin{enumerate}
        \item $\|uv\|_\FM \leq (2 \pi)^{-\frac{n}{2}} \|u\|_\FM \|v\|_\FM,$
        \item $\cF L^1(\R^n) \hookrightarrow \FM_0(\R^n) 
        \hookrightarrow \FM(\R^n) \hookrightarrow \BUC(\R^n)$.
    \end{enumerate}
\end{lemma}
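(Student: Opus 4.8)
The plan is to treat the two assertions separately; both reduce to elementary properties of finite Radon measures transported through the Fourier transform.

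For (1) I would first identify the pointwise product on $\FM(\R^n)$ with a scalar multiple of the convolution product on $\ME(\R^n)$. Given $\mu,\nu\in\ME(\R^n)$, define $\mu*\nu$ on the Borel $\sigma$-algebra by $(\mu*\nu)(E):=\int_{\R^n}\int_{\R^n}\mathbf{1}_{E}(x+y)\,d\mu(x)\,d\nu(y)$. A Fubini argument (after splitting into real/imaginary and positive/negative parts, all of finite variation) shows that $\mu*\nu$ is a well-defined $\mK^m$-valued Borel measure, and estimating $\sum_k|(\mu*\nu)(E_k)|$ over finite Borel decompositions $\{E_k\}$ by $\int\int\sum_k\mathbf{1}_{E_k}(x+y)\,d|\mu|(x)\,d|\nu|(y)\le|\mu|(\R^n)\,|\nu|(\R^n)$ gives $\|\mu*\nu\|_\ME\le\|\mu\|_\ME\|\nu\|_\ME$; in particular $\mu*\nu\in\ME(\R^n)$. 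Writing out $\widehat{\mu*\nu}(\xi)=(2\pi)^{-n/2}\int e^{-ix\cdot\xi}\,d(\mu*\nu)(x)$ and using $\int g\,d(\mu*\nu)=\int\int g(x+y)\,d\mu(x)\,d\nu(y)$ with $g(x)=e^{-ix\cdot\xi}$ yields $\widehat{\mu*\nu}=(2\pi)^{n/2}\,\hat\mu\,\hat\nu$. Hence for $u=\hat\mu$, $v=\hat\nu$ we obtain $uv=(2\pi)^{-n/2}\widehat{\mu*\nu}\in\FM(\R^n)$ and $\|uv\|_\FM=(2\pi)^{-n/2}\|\mu*\nu\|_\ME\le(2\pi)^{-n/2}\|u\|_\FM\|v\|_\FM$.

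For (2), the inclusion $\FM_0(\R^n)\hookrightarrow\FM(\R^n)$ is just the isometric inclusion of the closed subspace $\ME_0(\R^n)\hookrightarrow\ME(\R^n)$. For $\cF L^1(\R^n)\hookrightarrow\FM_0(\R^n)$, map $f\in L^1(\R^n)$ to the measure $\mu_f(E):=\int_E f\,dx$; then $\mu_f\in\ME(\R^n)$ with $\|\mu_f\|_\ME=\|f\|_{L^1}$, it is absolutely continuous and hence $\mu_f(\{0\})=0$, i.e.\ $\mu_f\in\ME_0(\R^n)$, and $\cF f=\hat\mu_f$ by the very definition of the Fourier transform, whence $\|\cF f\|_\FM=\|f\|_{L^1}$. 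Finally, for $\FM(\R^n)\hookrightarrow\BUC(\R^n)$, write $u=\hat\mu$, so $u(\xi)=(2\pi)^{-n/2}\int e^{-ix\cdot\xi}\,d\mu(x)$; boundedness is $|u(\xi)|\le(2\pi)^{-n/2}|\mu|(\R^n)=(2\pi)^{-n/2}\|u\|_\FM$, and for uniform continuity I would estimate
\[
|u(\xi)-u(\eta)|\le(2\pi)^{-n/2}\int_{\R^n}\bigl|e^{-ix\cdot(\xi-\eta)}-1\bigr|\,d|\mu|(x)\le(2\pi)^{-n/2}\int_{\R^n}\min\{2,\,|x|\,|\xi-\eta|\}\,d|\mu|(x),
\]
whose right-hand side depends only on $|\xi-\eta|$ and tends to $0$ as $|\xi-\eta|\to0$ by dominated convergence (with dominating function $2\in L^1(\R^n,|\mu|)$, since $|\mu|$ is finite); this gives a modulus of continuity independent of $\xi$, hence uniform continuity.

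The genuine — though modest — obstacle is the bookkeeping in part (1): verifying that the convolution of two finite vector valued Radon measures is a well-defined, Borel regular measure satisfying the submultiplicative variation estimate, together with the two Fubini interchanges needed to identify $\hat\mu\hat\nu$ with $(2\pi)^{-n/2}\widehat{\mu*\nu}$. Everything in part (2) is then routine. Since these facts are quoted from \cite{GS13}, one may alternatively simply cite the corresponding statements there; the above merely records the arguments for completeness.
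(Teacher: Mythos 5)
Your argument is correct. Note that the paper itself offers no proof of this lemma --- it is quoted verbatim from \cite{GS13} --- so there is nothing in-text to compare against; what you have written is a sound, self-contained reconstruction of the standard facts. The computation in (1) is the expected one: the convolution $\mu*\nu$ of finite Radon measures satisfies $\|\mu*\nu\|_{\ME}\le\|\mu\|_{\ME}\|\nu\|_{\ME}$, and with the normalization $\hat\mu(\xi)=(2\pi)^{-n/2}\int e^{-ix\cdot\xi}\,d\mu(x)$ one indeed gets $\widehat{\mu*\nu}=(2\pi)^{n/2}\hat\mu\hat\nu$, which yields exactly the constant $(2\pi)^{-n/2}$ in the algebra estimate; Borel regularity of $\mu*\nu$ is automatic since every finite Borel measure on $\R^n$ is Radon. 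Two small points worth making explicit: first, identifying $\cF^{-1}(uv)$ with $(2\pi)^{-n/2}\mu*\nu$ uses injectivity of the Fourier transform on $\S'(\R^n)$, so that $\|uv\|_{\FM}$ is well defined; second, since the spaces are $\mK^m$-valued, the ``product'' $uv$ has to be read componentwise (or as a matrix--vector product, as in Lemma~\ref{fm_lem1}), and your convolution argument carries over to that setting with only notational changes, using $|\nu_\mu|=1$ $|\mu|$-a.e.\ to pass from $d\mu$ to $d|\mu|$ in the estimates. Part (2) is routine and your uniform-continuity estimate via $\min\{2,|x|\,|\xi-\eta|\}$ and dominated convergence is exactly the standard proof.
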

\begin{proposition}\label{fmfouriermult}
For $\sigma \in \emph{BC}(\R^n \backslash \{0 \}, \mK^{m\times \ell})$ we set
$
    Op(\sigma)f := \cF^{-1}(\hat{f} \lfloor \sigma)
$.
Then we have
\begin{align*}
    \|Op(\sigma)\|_{\mathscr L(\FM_0(\R^n,\mK^m),\FM_0(\R^n,\mK^\ell))} 
    &= \|\sigma\|_{L^{\infty}(\R^n
    \backslash \{0 \}, \mK^{m\times \ell})}.
\end{align*}
If $\sigma$ is additionally continuous at the origin then the assertion
holds also with $\FM_0$ replaced by $\FM$.
\end{proposition}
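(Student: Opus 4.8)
The plan is to transport the assertion, via the Fourier transform, to an equivalent statement about the multiplication operator $T_\sigma\colon\mu\mapsto\mu\lfloor\sigma$ on Radon measures, where the operator norm can be read off directly: Lemma~\ref{fm_lem1}\,(1) delivers the bound ``$\le$'' and Dirac masses deliver ``$\ge$''. Concretely, by the very definition of $\FM_0(\R^n)$ and of the norm $\|u\|_\FM=\|\cF^{-1}u\|_\ME$, the Fourier transform is an isometric isomorphism of $\ME_0(\R^n)$ onto $\FM_0(\R^n)$ (and of $\ME(\R^n)$ onto $\FM(\R^n)$); since with the chosen normalisation $\cF^2$ is the reflection $x\mapsto-x$, which preserves the variation norm, one gets that under this identification $Op(\sigma)$ is conjugate to $T_\sigma$, hence $\|Op(\sigma)\|_{\sL(\FM_0)}=\|T_\sigma\|_{\sL(\ME_0)}$. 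I would first check that $T_\sigma$ really maps $\ME_0$ into itself: for $\mu\in\ME_0$ the singleton $\{0\}$ has no nontrivial measurable partition, so $|\mu|(\{0\})=|\mu(\{0\})|=0$; thus the $\mK^m$-valued density $\nu_\mu$ matters only off the origin, where $\sigma$ is defined, $\mu\lfloor\sigma\in\ME(\R^n)$ by Lemma~\ref{fm_lem1}\,(2), and $(\mu\lfloor\sigma)(\{0\})=0$, i.e.\ $\mu\lfloor\sigma\in\ME_0$.

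For the upper bound, note that a function continuous on the open set $\R^n\setminus\{0\}$ has Lebesgue-essential supremum there equal to its pointwise supremum, so $|\sigma(x)|\le\|\sigma\|_{L^\infty(\R^n\setminus\{0\})}=:M$ for every $x\neq0$. By Lemma~\ref{fm_lem1}\,(1), $|\mu\lfloor\sigma|\le|\mu|\lfloor|\sigma|$, whence
\[
 \|\mu\lfloor\sigma\|_\ME=|\mu\lfloor\sigma|(\R^n)\le\int_{\R^n\setminus\{0\}}|\sigma|\,d|\mu|\le M\,|\mu|(\R^n)=M\,\|\mu\|_\ME ,
\]
using $|\mu|(\{0\})=0$; so $\|T_\sigma\|_{\sL(\ME_0)}\le M$. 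When, moreover, $\sigma$ is continuous at the origin, extend it to $\sigma\in\BC(\R^n,\mK^{m\times\ell})$ with $\|\sigma\|_{L^\infty(\R^n)}=M$; then $T_\sigma$ is defined on all of $\ME(\R^n)$ and the displayed estimate holds for every $\mu\in\ME(\R^n)$, giving $\|Op(\sigma)\|_{\sL(\FM)}\le M$.

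For the lower bound I would test on point masses. Given $\eps>0$, by the equality of essential and pointwise supremum pick $x_0\neq0$ with $|\sigma(x_0)|>M-\eps$, and then a unit vector $a$ with $|\sigma(x_0)\,a|>|\sigma(x_0)|-\eps$ (the matrix norm $|\sigma(x_0)|=\sup_{|b|=1}|\sigma(x_0)b|$ being the one implicit in Lemma~\ref{fm_lem1}\,(1) and in $\|\cdot\|_{L^\infty}$). Put $\mu:=a\,\delta_{x_0}$; since $x_0\neq0$ we have $\mu\in\ME_0$, $\|\mu\|_\ME=|a|=1$, density $\nu_\mu\equiv a$, and $\mu\lfloor\sigma=(\sigma(x_0)\,a)\,\delta_{x_0}$, so $\|T_\sigma\mu\|_\ME=|\sigma(x_0)\,a|>M-2\eps$. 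Letting $\eps\to0$ yields $\|T_\sigma\|_{\sL(\ME_0)}\ge M$, hence $\|Op(\sigma)\|_{\sL(\FM_0)}=M$; and as these test functions already lie in $\FM_0\subset\FM$, this lower bound together with the preceding paragraph gives $\|Op(\sigma)\|_{\sL(\FM)}=M$ in the continuous case as well. There is no real analytic obstacle here — Lemma~\ref{fm_lem1}\,(1) does the work; the only points that need care are the measure-theoretic bookkeeping that keeps $T_\sigma$ well-defined on $\ME_0$ despite $\sigma$ being given only off the origin (harmless since $|\mu|(\{0\})=0$), and the elementary fact that a continuous symbol's $L^\infty$-norm is a genuine pointwise supremum, which is precisely what allows the Dirac test measures to detect the full norm of $\sigma$ and makes the inequality sharp.
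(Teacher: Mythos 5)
Your proof is correct, and it is the natural (indeed standard) argument: conjugating by the Fourier transform reduces the claim to the multiplication operator $\mu\mapsto\mu\lfloor\sigma$ on $\ME_0(\R^n)$, where Lemma~\ref{fm_lem1}(1) together with $|\mu|(\{0\})=0$ gives the upper bound and Dirac masses $a\,\delta_{x_0}$ with $x_0\neq 0$ saturate it. Note that the paper itself supplies no proof of Proposition~\ref{fmfouriermult} but recalls it from \cite{GS13}; your argument is essentially the one given there, and you handle the two genuine delicacies correctly (that $\sigma$ need only be defined off the origin because $\mu\in\ME_0$ charges no mass at $0$, and that the matrix norm must be the Euclidean operator norm for the equality, rather than a mere two-sided estimate, to hold).
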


\begin{remark}\label{wainfm}
By the fact that $\cF e^{ik \cdot } = (2 \pi)^{\frac{n}{2}}
\delta (\cdot - k)$ with $\delta$ the Dirac measure, we obtain
$
\|e^{ik \cdot }\|_\FM 
= (2 \pi)^{\frac{n}{2}} \|\delta(\cdot - k)\|_\ME < \infty$.
Hence $e^{ik \cdot } \in \FM_{0}(\R^n)$ for $k\neq 0$ which proves
the wave ansatz (\ref{wavean}) to be a function in $\FM_0(\R^n)$.
\end{remark}

\section{Well-posedness, stability, and turbulence }\label{subsec_wp}

We consider the following physically relevant stationary solutions:
\begin{equation}\label{disorderedstate}
	(v,p)=(0,p_0)
\end{equation}
with a pressure constant $p_0$ and, if $\alpha<0$, additionally
\begin{equation}\label{orderedstate}
	(v,p)=(V,p_0),
\end{equation}
where $V\in B_{\alpha,\beta}
:=\{x\in \mathbb R^n:\ |x|=\sqrt{-\alpha/\beta}\}$, i.e., $V$ 
denotes a constant vector with arbitrary orientation
and fixed swimming speed $|V|=\sqrt{-\alpha/\beta}$.
The steady state (\ref{disorderedstate}) corresponds to a {\em disordered
isotropic state} and (\ref{orderedstate}) to the manifold
$B_{\alpha,\beta}$ of {\em globally
ordered polar states}.

In order to include the steady states, 
as in \cite{zls2016} we consider the following generalized system:
\begin{equation}
	\label{eqn:min-hyd-mod-trans}
	\begin{array}{r@{\,=\,}l}
		\!u_t+\lambda_0\left[(u+V)\cdot\nabla\right] u 
		+(M+\beta|u|^2)u-\Gamma_0\Delta
		u+\Gamma_2\Delta^2u+\nabla q& 
		  f+N(u), \\
	\mbox{div}\,u &   0,\\    
	u(0)&u_0.
	\end{array}
\end{equation}
Here $q=p-\lambda_1 |v|^2$, $M\in\R^{n\times n}$ is a symmetric matrix, and
$N(u)=\sum_{j,k}a_{jk}u^ju^k$ with $(a_{jk})_{j,k=1}^n\subset \R^n$ 
is a quadratic nonlinear term. By setting
\begin{equation}\label{valuesds}
	V=0,\quad M=\alpha,\quad N(u)=0
\end{equation}
we obtain (\ref{eqn:min-hyd-mod}) for $u=v$, i.e., the system corresponding to
the steady state (\ref{disorderedstate}) and by setting
\begin{equation}\label{valuesos}
	V\in B_{\alpha,\beta},\quad M=2\beta VV^T,\quad 
	N(u)=-\beta|u|^2V-2\beta(u\cdot V)u
\end{equation}
we obtain the system for $u=v-V$ corresponding to (\ref{orderedstate}).
Note that for the appearing parameters we always assume that 
\begin{equation}\label{domparameters}
	\lambda_0,\lambda_1,\Gamma_0,\alpha\in\R;
	\qquad \Gamma_2,\beta>0.
\end{equation}
Furthermore, space dimension is always assumed to be $n=2$ or $n=3$.

\subsection{The linearized system}\label{sec_lin}

In this subsection we consider the linearized system
\begin{equation}
        \label{lflin}
        \begin{array}{r@{\ =\ }ll}
                u_t+\lambda_0(V\cdot\nabla) u 
                +Mu-\Gamma_0\Delta
                u+\Gamma_2\Delta^2u+\nabla q& 
                  f&\text{in } (0,\infty)\times\R^n, \\
        \mbox{div}\,u &   0&\text{in } (0,\infty)\times\R^n,\\    
        u(0)&u_0&\text{in }\R^n.
        \end{array}
\end{equation}
In a first step we introduce the Helmholtz projection on 
$\FM_0(\R^n)$. The symbol of the Helmholtz projection 
is defined as $\sigma_P(\xi) := I - \xi\xi^T / |\xi|^2$ and 
the corresponding operator as 
$Pu := \cF^{-1} (\hat{u} \lfloor \sigma_P)$ for $u \in \FM_0(\R^n)$. 
Note that $P$ is bounded on $\FM_0(\R^n)$ by Proposition~\ref{fmfouriermult}. 
We obtain the Helmholtz decomposition
\[
    \FM_0(\R^n) = \FM_{0, \sigma}(\R^n) \oplus G_{\FM}(\R^n),
\]
with
\begin{align*}
    \FM_{0, \sigma}(\R^n) &:= P\FM_0(\R^n) = \{ u \in \FM_0(\R^n)
    : \text{div} \: u = 0 \}, \\
    G_{\FM}(\R^n) &:= \{ \nabla p : p \in \widehat{\FM}_0^1(\R^n) \},
\end{align*}
where $\widehat{\FM}_0^1(\R^n) = \left \{ p \in \D'(\R^n) : \nabla p \in
\FM_0(\R^n) \right \} / \C$, see \cite{GS13}. 
Next, we define the operator associated to (\ref{lflin}) as
\begin{equation}\label{op_alf}
    \begin{aligned}
    &A_{LF}u := \lambda_0 (V \cdot \nabla)u + PMu - \Gamma_0\Delta u +
    \Gamma_2\Delta^2 u, \\
    &D(A_{LF}) := \FM_{0, \sigma}^4(\R^n) 
    := \FM_{0, \sigma}(\R^n) \cap \FM^4(\R^n).
    \end{aligned}
\end{equation}
The Fourier symbol of the operator $A_{LF}$ then reads as
\[
    \sigma_{A_{LF}}(\xi) := \cF^{-1}A_{LF}\cF 
    = \Gamma_2|\xi|^4 + \Gamma_0|\xi|^2 + \sigma_P(\xi)M + i\lambda_0
    V\cdot\xi,
    \quad \xi \in \R^n.
\]
Again thanks to Proposition~\ref{fmfouriermult} 
we obtain
\begin{proposition}\label{op_alf_calculus}
There exists an $\omega > 0$ 
such that $\omega + A_{LF}$ admits a bounded 
$H^\infty$-calculus on $\FM_{0,\sigma}(\R^n)$ with $H^\infty$-angle
$\phi^\infty_{\omega + A_{LF}}<\pi/2$.
\end{proposition}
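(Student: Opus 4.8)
The plan is to realize $A_{LF}$ as a Fourier multiplier operator on $\FM_{0,\sigma}(\R^n)$ and to reduce everything, via Proposition~\ref{fmfouriermult}, to a pointwise estimate for its matrix symbol. First I would record the structure of $\sigma_{A_{LF}}(\xi)$ on the relevant subspace. Since $\widehat u(\xi)\in\mathrm{ran}\,\sigma_P(\xi)$ for every solenoidal $u$ and every $\xi\neq0$, and since $\xi^T\sigma_P(\xi)=0$ shows this hyperplane is invariant under $\sigma_{A_{LF}}(\xi)$, only the restriction
\[
  \sigma_{A_{LF}}(\xi)\big|_{\mathrm{ran}\,\sigma_P(\xi)}
  =\bigl(a(\xi)+i\,b(\xi)\bigr)I+S(\xi),\qquad
  a(\xi):=\Gamma_2|\xi|^4+\Gamma_0|\xi|^2,\quad b(\xi):=\lambda_0 V\cdot\xi,
\]
matters, where $S(\xi)$ is the compression of $\sigma_P(\xi)M\sigma_P(\xi)$. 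The key point I want to exploit is that $M$ is \emph{symmetric}: then $S(\xi)$ is self-adjoint with $\|S(\xi)\|\le\|M\|$, so the restricted symbol is \emph{normal}, unitarily diagonalizable as $U(\xi)\,\mathrm{diag}\bigl(a(\xi)+ib(\xi)+s_j(\xi)\bigr)\,U(\xi)^{*}$ with $U(\xi)$ unitary and the $s_j(\xi)$ real, lying in $[-\|M\|,\|M\|]$.

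Next I would fix the shift. Because $a(\xi)\ge\tfrac12\Gamma_2|\xi|^4-\Gamma_0^2/(2\Gamma_2)$ for all $\xi$, one can choose $\omega>0$ so large that $\re\bigl(\omega+a(\xi)+s_j(\xi)\bigr)\ge\tfrac12\Gamma_2|\xi|^4+1$ for all $\xi\neq0$ and all $j$. Since $|b(\xi)|\le|\lambda_0|\,|V|\,|\xi|$, the ratio $|b(\xi)|/\re\bigl(\omega+a(\xi)+s_j(\xi)\bigr)$ is bounded by a constant $C_0$ independent of $\xi$ and $j$ (it tends to $0$ both as $|\xi|\to0$ and as $|\xi|\to\infty$), so every eigenvalue of $\omega I+\sigma_{A_{LF}}(\xi)|_{\mathrm{ran}\,\sigma_P(\xi)}$ lies in the sector $\Sigma_\phi$ with $\phi:=\arctan C_0<\pi/2$ and has modulus $\ge1$. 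By normality this also yields the uniform resolvent bound $\|\mu(\mu-\omega-\sigma_{A_{LF}}(\xi))^{-1}\|\le C_{\phi''}$ for all $\xi\neq0$ and all $\mu\notin\Sigma_{\phi''}$, $\phi<\phi''<\pi/2$, and an application of Proposition~\ref{fmfouriermult} to the resolvent symbols then shows that $\omega+A_{LF}$ is invertible and sectorial of angle $\le\phi$ on $\FM_{0,\sigma}(\R^n)$, with dense domain $\FM^4_{0,\sigma}(\R^n)$.

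Finally I would run the holomorphic functional calculus. For $h$ holomorphic on a sector $\Sigma_\psi$, $\phi<\psi<\pi/2$, with polynomial decay at $0$ and $\infty$, the Dunford integral defining $h(\omega+A_{LF})$ converges absolutely and the bounded linear map $Op(\cdot)$ pulls out of it, so $h(\omega+A_{LF})=Op\bigl(h(\omega I+\sigma_{A_{LF}}(\cdot))\bigr)$; by the unitary diagonalization of the first step, the operator norm of the symbol at each $\xi$ equals $\max_j|h(\omega+a(\xi)+ib(\xi)+s_j(\xi))|\le\|h\|_{H^\infty(\Sigma_\psi)}$, since all those points lie in $\Sigma_\phi\subset\Sigma_\psi$. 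Proposition~\ref{fmfouriermult} then gives $\|h(\omega+A_{LF})\|_{\sL(\FM_{0,\sigma}(\R^n))}\le\|h\|_{H^\infty(\Sigma_\psi)}$, and the standard density (convergence lemma) argument extends this to all $h\in H^\infty(\Sigma_\psi)$; hence $\omega+A_{LF}$ has a bounded $H^\infty$-calculus with $\phi^\infty_{\omega+A_{LF}}\le\phi<\pi/2$. I expect the only genuinely delicate step to be the first one: the symmetry of $M$ is exactly what makes the matrix symbol normal, so that its functional calculus is controlled by $\|h\|_\infty$ with \emph{no} $\xi$-dependent conditioning constant. Without that observation a uniform-in-$\xi$ $H^\infty$-bound for the matrix family $\{\omega+\sigma_{A_{LF}}(\xi)\}_{\xi\neq0}$ would require a more elaborate ($\mathcal R$-sectoriality-type) argument; everything after the first step is the elementary sector estimate together with the Fourier multiplier theorem on $\FM$.
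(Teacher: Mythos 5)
Your argument follows the same route as the paper's: realize everything through the matrix symbol via the identity $h(\omega+A_{LF})P=\cF^{-1}\,h(\omega+\sigma_{A_{LF}})\,\sigma_P\,\cF$ and Proposition~\ref{fmfouriermult}, establish sectoriality from the resolvent symbols, and run the Dunford integral. The only place you go beyond the paper is the normality observation: the paper simply asserts the uniform bound $\|h(\omega+\sigma_{A_{LF}})\sigma_P\|_{L^\infty}\le C_\vp\|h\|_\infty$, whereas your unitary diagonalization of the compression $(a(\xi)+ib(\xi))I+S(\xi)$ on $\mathrm{ran}\,\sigma_P(\xi)$ (using the symmetry of $M$) is precisely the justification needed to make that estimate uniform in $\xi$ — a correct and worthwhile sharpening of the same proof rather than a different one.
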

\begin{proof}
Since $\Gamma_2 > 0$ there exists an $\omega > 0$
and a $\vp_0\in(0,\pi/2)$
such that $\omega+\sigma_{A_{LF}}\in \overline{\Sigma}_{\vp_0}$ and
$|\omega + \sigma_{A_{LF}}| \ge \delta> 0$ on $\R^n\setminus\{0\}$. 
Thus for $\vp\in(\vp_0,\pi/2)$ the symbol
$\xi\mapsto h(\omega + \sigma_{A_{LF}}(\xi))\sigma_P(\xi)$ is bounded and 
continuous on $\R^n\setminus\{0\}$ and satisfies
\[
	\|h(\omega + \sigma_{A_{LF}})\sigma_P\|_{L^\infty(\R^n)} 
	\le C_\vp\|h\|_\infty
	\quad (h\in H^\infty(\Sigma_\vp)),
\]
where $H^\infty(\Sigma_\vp)$ denotes the space of bounded holomorphic
functions on the sector $\Sigma_\vp:=\{z\in\C\setminus\{0\};\ |\arg z|<\vp\}$.
By the fact that
\[
    h(\omega + A_{LF})P = \cF^{-1}h(\omega + \sigma_{A_{LF}})\sigma_P\cF
\]
Proposition~\ref{fmfouriermult} yields
\begin{equation}\label{hinfest}
	\|h(\omega + A_{LF})P\|_\FM \le C_\vp\|h\|_\infty
	\quad (h\in H^\infty(\Sigma_\vp)).
\end{equation}
Setting $h(z):=\lambda(\lambda+z)^{-1}$, estimate (\ref{hinfest})
and the fact that $\omega + A_{LF}$ is invertible imply sectoriality of
$\omega + A_{LF}$ on $\FM_{0,\sigma}(\R^n)$ 
with spectral angle $\phi_{\omega + A_{LF}}<\pi/2$.
Thus the holomorphic functional calculus via the Dunford integral 
is defined as usual, see \cite{DHP03}.
Estimate (\ref{hinfest}) then yields the assertion.
\end{proof}

Note that by the sectoriality of $\omega + A_{LF}$ 
the operator $-A_{LF}$ generates an analytic $C_0$-semigroup 
on $\FM_{0,\sigma}(\R^n)$.
Furthermore, fractional powers 
$(\omega + A_{LF})^\gamma: D((\omega + A_{LF})^\gamma)\to
\FM_{0,\sigma}(\R^n)$, $\gamma>0$, are well-defined, see \cite{DHP03}.
As a consequence of Proposition~\ref{op_alf_calculus} we immediately obtain
\begin{corollary}\label{op_alf_interpolation}
For $\gamma \in(0,1)$ we have
\[
    [\FM_{0,\sigma}(\R^n),D(A_{LF})]_\gamma 
    =D((\omega+A_{LF})^\gamma)  
    = \FM_0^{4\gamma}(\R^n) \cap \FM_{0,\sigma}(\R^n),
\] 
where $[\cdot,\cdot]_\gamma$ denotes the complex interpolation functor.
\end{corollary}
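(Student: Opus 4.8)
The plan is to identify the complex interpolation space $[\FM_{0,\sigma}(\R^n),D(A_{LF})]_\gamma$ with $D((\omega+A_{LF})^\gamma)$ via the $H^\infty$-calculus, and then compute the latter explicitly using the Fourier symbol. For the first identification I would invoke the standard fact (see \cite{DHP03}) that an operator admitting a bounded $H^\infty$-calculus on a Banach space has bounded imaginary powers, and that for such operators the complex interpolation scale coincides with the domains of fractional powers: $[\FM_{0,\sigma}(\R^n),D(A_{LF})]_\gamma = D((\omega+A_{LF})^\gamma)$. This is exactly what Proposition~\ref{op_alf_calculus} hands us, since $\omega+A_{LF}$ has a bounded $H^\infty$-calculus with angle $<\pi/2$; the only minor point to check is that $\FM_{0,\sigma}(\R^n)$ is a Banach space on which the BIP-implies-interpolation machinery applies, which it is.

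The second and more computational identification is $D((\omega+A_{LF})^\gamma) = \FM_0^{4\gamma}(\R^n)\cap\FM_{0,\sigma}(\R^n)$. Here I would use that $(\omega+A_{LF})^\gamma$ is, on the Fourier side, the Fourier multiplier with symbol $(\omega+\sigma_{A_{LF}}(\xi))^\gamma$ acting on $\FM_{0,\sigma}$, so by Proposition~\ref{fmfouriermult} the norm $\|(\omega+A_{LF})^\gamma u\|_{\FM}$ is comparable to $\|\,\hat u\lfloor(\omega+\sigma_{A_{LF}})^\gamma\,\|_{\ME}$. The key estimate is that the symbol $(\omega+\sigma_{A_{LF}}(\xi))^\gamma$ is two-sided comparable to $(1+|\xi|)^{4\gamma}$, uniformly in $\xi$; indeed the dominant term $\Gamma_2|\xi|^4$ (with $\Gamma_2>0$) forces $|\omega+\sigma_{A_{LF}}(\xi)|\sim 1+|\xi|^4$ for large $|\xi|$, while $\omega$ large enough keeps it bounded below near the origin — both facts were already extracted in the proof of Proposition~\ref{op_alf_calculus}. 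Dividing the multiplier $(\omega+\sigma_{A_{LF}})^\gamma$ by $(1+|\xi|)^{4\gamma}$ and conversely, and checking that both quotients lie in $\BC(\R^n\setminus\{0\},\mK^{m\times m})$ (continuity at the origin being fine since $\omega+A_{LF}$ is invertible), Proposition~\ref{fmfouriermult} then gives the norm equivalence $\|(\omega+A_{LF})^\gamma u\|_{\FM}\sim \|(1+|\xi|)^{4\gamma}\hat u\|_{\ME}$. The right-hand side is precisely the norm defining $\FM^{4\gamma}_0(\R^n)$; intersecting with the closed, $(\omega+A_{LF})^\gamma$-invariant subspace $\FM_{0,\sigma}(\R^n)$ and using that $P$ commutes with the multiplier yields the claim.

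One subtlety worth addressing carefully: the operator $\sigma_{A_{LF}}$ is matrix-valued (because of the term $\sigma_P(\xi)M + i\lambda_0 V\cdot\xi$), so "comparable to a scalar symbol" must be read in the sense that $(\omega+\sigma_{A_{LF}}(\xi))^\gamma (1+|\xi|)^{-4\gamma}$ and its inverse are uniformly bounded matrix-valued functions. This follows from the resolvent/sectoriality bound already established, since on the sector the $\gamma$-th power is controlled by the $\gamma$-th power of the norm, and the scalar lower bound $|\omega+\sigma_{A_{LF}}(\xi)|\gtrsim 1+|\xi|^4$ controls the inverse matrix through the sectoriality estimate $\|(\omega+\sigma_{A_{LF}}(\xi))^{-1}\|\lesssim |\,\cdot\,|^{-1}$ uniformly.

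The main obstacle is the symbol bookkeeping for the matrix-valued power $(\omega+\sigma_{A_{LF}}(\xi))^\gamma$ — verifying that it is, together with its "reciprocal", a bounded continuous matrix-valued Fourier multiplier on $\FM$ — rather than anything conceptual; the interpolation identity itself is a direct citation once the $H^\infty$-calculus of Proposition~\ref{op_alf_calculus} is in hand.
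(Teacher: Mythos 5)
Your proposal is correct and follows essentially the same route as the paper: the first identity is the standard consequence of the bounded $H^\infty$-calculus (hence BIP) from Proposition~\ref{op_alf_calculus} (the paper cites \cite[Theorem 1.15.3]{triebel} for this), and the second is the explicit Fourier-symbol computation showing $(\omega+\sigma_{A_{LF}})^\gamma$ is a two-sided $\FM$-multiplier equivalent to $(1+|\xi|)^{4\gamma}$ via Proposition~\ref{fmfouriermult}. The paper's proof is just a terser version of exactly this argument; your attention to the matrix-valued nature of the symbol fills in a detail the paper leaves implicit.
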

\begin{proof}
By means of Fourier transformation it is straight forward to verify
the second equality, whereas the first equality is a consequence
of \cite[Theorem 1.15.3]{triebel}.
\end{proof}
One advantage of working in $\FM(\R^n)$ 
is reflected by the fact that the operator $\Gamma_2\Delta^2$ 
with domain $D(\Gamma_2\Delta^2) = \FM^4(\R^n)$ 
has $L^1$ maximal regularity.
\begin{proposition}\label{l1_max_reg}
Let $1 \leq p \leq \infty$. For $T(t) := \exp(-\Gamma_2 t \Delta^2)$ and $(\Delta^2 T \star f)(t) := \Delta^2 \int_0^t T(t-s)f(s) \, \emph{d}s$ we have
\begin{enumerate}
    \item $\|\Delta^2 T u_0\|_{L^p(\R_+,\FM(\R^n))} 
    \leq \frac{1}{(\Gamma_2 p)^{1/p}} \|u_0\|_{\FM^{4 - 4/p}}$,
    \item $\|\Delta^2 T \star f\|_{L^1(\R_+, \FM(\R^n))} 
    \leq \frac{1}{\Gamma_2}\|f\|_{L^1(\R_+, \FM(\R^n))}$.
\end{enumerate}
\end{proposition}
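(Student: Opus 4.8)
The plan is to work entirely on the Fourier side, where the operator $\Gamma_2\Delta^2$ becomes multiplication by the (nonnegative) symbol $\Gamma_2|\xi|^4$, and to exploit that a Fourier transformed Radon measure has norm given by the total variation of its inverse Fourier transform. Concretely, write $\hat u_0=\mu\in\ME(\R^n)$. Then $T(t)u_0=\cF^{-1}(\hat u_0\lfloor e^{-\Gamma_2 t|\xi|^4})$, so that $\Delta^2 T(t)u_0=\cF^{-1}\bigl(\hat u_0\lfloor(|\xi|^4 e^{-\Gamma_2 t|\xi|^4})\bigr)$ (the sign bookkeeping: $\cF(\Delta^2 g)=|\xi|^4\hat g$). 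By Lemma~\ref{fm_lem1}(1),(2) and the definition of the $\FM$-norm,
\[
\|\Delta^2 T(t)u_0\|_{\FM}=\bigl|\mu\lfloor(|\xi|^4 e^{-\Gamma_2 t|\xi|^4})\bigr|(\R^n)\le\int_{\R^n}|\xi|^4 e^{-\Gamma_2 t|\xi|^4}\,d|\mu|(\xi).
\]
For (1) I then raise this to the $p$-th power, integrate in $t$ over $\R_+$, and apply Tonelli to swap the $t$- and $|\mu|$-integrals. The inner integral $\int_0^\infty |\xi|^{4p}e^{-\Gamma_2 p\, t|\xi|^4}\,dt=\frac{|\xi|^{4p-4}}{\Gamma_2 p}$ is an elementary computation (substitute $\tau=t|\xi|^4$, valid for $\xi\ne 0$; the point mass at $0$, if present, contributes $0$). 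This leaves $\|\Delta^2 Tu_0\|_{L^p(\R_+,\FM)}^p\le\frac{1}{\Gamma_2 p}\int_{\R^n}|\xi|^{4-4/p\cdot p}\cdots$ — more precisely $\frac{1}{\Gamma_2 p}\int|\xi|^{4p-4}\,d|\mu|$, and since $4p-4=(4-4/p)p$, the right-hand side is exactly $\frac{1}{\Gamma_2 p}\||\xi|^{4-4/p}\lfloor$-weighted variation$\|$, i.e. $\frac{1}{\Gamma_2 p}\|u_0\|_{\FM^{4-4/p}}^p$ by the definition of $\FM^s$. Taking $p$-th roots gives the stated constant $(\Gamma_2 p)^{-1/p}$. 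The endpoint $p=\infty$ is handled separately by the trivial bound $\sup_{t,\xi}|\xi|^4 e^{-\Gamma_2 t|\xi|^4}\le\sup_{\xi}|\xi|^4$ after... actually one takes $\sup_t$ first: $\sup_{t>0}|\xi|^4e^{-\Gamma_2 t|\xi|^4}=|\xi|^4$, giving $\|\Delta^2 Tu_0\|_{L^\infty(\FM)}\le\||\xi|^4 d|\mu|\|=\|u_0\|_{\FM^4}$, consistent with the formula as $p\to\infty$.

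For (2) the key identity is that $\Delta^2 T\star f$ is, on the Fourier side and for each fixed $t$, the measure $\int_0^t \widehat{f(s)}\lfloor(|\xi|^4 e^{-\Gamma_2(t-s)|\xi|^4})\,ds$, so
\[
\|(\Delta^2 T\star f)(t)\|_{\FM}\le\int_0^t\!\!\int_{\R^n}|\xi|^4 e^{-\Gamma_2(t-s)|\xi|^4}\,d|\widehat{f(s)}|(\xi)\,ds.
\]
Integrating in $t$ over $\R_+$, Tonelli lets me perform the $t$-integral first: for fixed $s$ and $\xi\ne 0$, $\int_s^\infty|\xi|^4 e^{-\Gamma_2(t-s)|\xi|^4}\,dt=\int_0^\infty|\xi|^4 e^{-\Gamma_2 r|\xi|^4}\,dr=\frac{1}{\Gamma_2}$. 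Hence the whole expression is bounded by $\frac{1}{\Gamma_2}\int_0^\infty\int_{\R^n}d|\widehat{f(s)}|(\xi)\,ds=\frac{1}{\Gamma_2}\int_0^\infty\|f(s)\|_{\FM}\,ds=\frac{1}{\Gamma_2}\|f\|_{L^1(\R_+,\FM)}$, as claimed. This is the $L^1$-maximal-regularity statement; note it is precisely the cancellation $\int_0^\infty \lambda e^{-\lambda r}\,dr=1$ with $\lambda=\Gamma_2|\xi|^4$ that makes the bound $\xi$-independent.

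The only genuine subtlety — and the step I would be most careful about — is the measurability and Fubini/Tonelli justification in the convolution bound: one needs that $(s,\xi)\mapsto$ the relevant integrand is jointly measurable with respect to $ds\otimes|\widehat{f(s)}|$, which requires knowing that $s\mapsto \widehat{f(s)}\in\ME(\R^n)$ is Bochner measurable (it is, since $f\in L^1(\R_+,\FM)$ and $\cF^{-1}$ is an isometry onto $\ME$) together with a disintegration argument to make sense of the iterated variation integral; this is exactly the kind of measure-theoretic bookkeeping developed in \cite{GS13}, and I would simply cite the $\FM$-framework there rather than reprove it. Everything else is the elementary exponential integral $\int_0^\infty \lambda^a e^{-\lambda c t}\,dt$ and the variation estimates from Lemma~\ref{fm_lem1}.
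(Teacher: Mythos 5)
Your treatment of part (2) is fine and is in fact a different route from the paper's: the paper deduces (2) from the $p=1$ case of (1) by citing \cite[Lemma 2.4]{GS11}, whereas you reprove that lemma directly via Tonelli on the double integral, using that $\int_s^\infty \Gamma_2|\xi|^4 e^{-\Gamma_2(t-s)|\xi|^4}\,\mathrm{d}t=1$ uniformly in $\xi\neq 0$. That is correct (modulo the measurability bookkeeping you flag, which is indeed handled in \cite{GS13}) and makes the argument self-contained.

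Part (1), however, has a genuine gap for $1<p<\infty$. After the variation estimate
\[
\|\Delta^2 T(t)u_0\|_{\FM}\le \int_{\R^n}|\xi|^4 e^{-\Gamma_2 t|\xi|^4}\,\mathrm{d}|\mu|(\xi),
\]
you ``raise to the $p$-th power and apply Tonelli''. But $\bigl(\int g\,\mathrm{d}|\mu|\bigr)^p$ is not an iterated integral, so Tonelli does not apply, and the implicit inequality $\bigl(\int g\,\mathrm{d}|\mu|\bigr)^p\le\int g^p\,\mathrm{d}|\mu|$ is false unless $|\mu|(\R^n)\le 1$ (Jensen gives the extra factor $|\mu|(\R^n)^{p-1}$). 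Your resulting intermediate claim $\|\Delta^2 Tu_0\|_{L^p(\FM)}^p\le\frac{1}{\Gamma_2 p}\int|\xi|^{4p-4}\,\mathrm{d}|\mu|$ is actually wrong: take $\hat u_0=\delta_{\xi_0}+\delta_{\xi_1}$ with $|\xi_0|=1$, $|\xi_1|=2$ and $p=2$; then $\|\Delta^2T(t)u_0\|_{\FM}=e^{-\Gamma_2 t}+16e^{-16\Gamma_2 t}$ and $\int_0^\infty(\cdot)^2\,\mathrm{d}t=\frac{1}{\Gamma_2}\bigl(\frac12+\frac{32}{17}+8\bigr)\approx\frac{10.4}{\Gamma_2}$, which exceeds your bound $\frac{1}{2\Gamma_2}\int|\xi|^4\,\mathrm{d}|\mu|=\frac{8.5}{\Gamma_2}$. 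The second misstep, identifying $\int|\xi|^{4p-4}\,\mathrm{d}|\mu|$ with $\bigl(\int|\xi|^{4-4/p}\,\mathrm{d}|\mu|\bigr)^p=\|u_0\|_{\FM^{4-4/p}}^p$, is also false in general, and the two errors do not cancel into a valid chain. The correct tool --- and what the paper uses --- is Minkowski's integral inequality: pull the $L^p(\R_+)$-norm inside the $\mathrm{d}|\mu|$-integral at the level of norms,
\[
\|\Delta^2 Tu_0\|_{L^p(\R_+,\FM)}\le\int_{\R^n}|\xi|^4\,\bigl\|e^{-\Gamma_2(\cdot)|\xi|^4}\bigr\|_{L^p(\R_+)}\,\mathrm{d}|\mu|(\xi)
=\frac{1}{(\Gamma_2 p)^{1/p}}\int_{\R^n}|\xi|^{4-4/p}\,\mathrm{d}|\mu|(\xi),
\]
which gives the assertion directly. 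Your $p=1$ and $p=\infty$ endpoint arguments are fine as written.
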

\begin{proof}
To prove (1) we have due to Lemma~\ref{fm_lem1}(1) that
\begin{align*}
    \|\Delta^2 e^{- \Gamma_2 t \Delta^2} u_0\|_\FM
    = \|\widehat{u}_0 \lfloor (|\xi|^4 e^{-\Gamma_2 t |\xi|^4})\|_\FM
    \leq \int_{\R^n} |\xi|^4 |e^{- \Gamma_2 t |\xi|^4}| \:
    \text{d}|\widehat{u}_0|(\xi).
\end{align*}
Then the assertion follows since
\begin{align*}
    \|\Delta^2 T u_0\|_{L^p(\R_+, \FM(\R^n))} 
    &\leq \int_{\R^n} |\xi|^4 \|e^{- \Gamma_2 (\cdot) |\xi|^4}\|_{L^p(\R_+)} 
    \: \text{d} |\widehat{u}_0|(\xi)\\
    &\leq \frac{1}{(\Gamma_2 p)^{1/p}} \|u_0\|_{\FM^{4 - 4/p}}.
\end{align*}
Estimate (2) follows from (1) and \cite[Lemma 2.4]{GS11}.
\end{proof}
Consequently, $A_{LF}$ has $L^1$ maximal regularity as well:
\begin{theorem}\label{max_reg}
Let $T\in(0,\infty)$. For $f\in
L^1((0,T),\FM_{0,\sigma}(\R^n))$
and $u_0\in \FM_{0,\sigma}(\R^n)$ there exists a
unique solution $(u,q)$
of (\ref{lflin}) satisfying
\begin{align*}
        &\|u\|_{W^{1,1}((0,T),\FM)}
	        +\|u\|_{L^{1}((0,T),\FM^{4})}
		+\|\nabla q\|_{L^{1}((0,T),\FM)}\\
	&\le C(T)\left(\|f\|_{L^{1}((0,T),\FM)}
	 +\|u_0\|_{\FM}\right)
\end{align*}
with $C(T)>0$ independent of $u,q,f,u_0$.
\end{theorem}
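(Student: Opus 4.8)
The plan is to reduce Theorem~\ref{max_reg} to the two building blocks already established: the $L^1$ maximal regularity of the biharmonic semigroup from Proposition~\ref{l1_max_reg} and the boundedness of the remaining lower-order terms, the pressure gradient, and the Helmholtz projection on $\FM_0(\R^n)$. First I would apply the Helmholtz projection $P$ to (\ref{lflin}): since $f\in L^1((0,T),\FM_{0,\sigma}(\R^n))$ and $u$ is divergence free, projecting kills $\nabla q$ and turns the first equation into the abstract evolution equation $u_t+A_{LF}u=f$ on $\FM_{0,\sigma}(\R^n)$, with $A_{LF}$ as in (\ref{op_alf}). The pressure is then recovered a posteriori from $\nabla q=(I-P)\bigl(f-\lambda_0(V\cdot\nabla)u-Mu+\Gamma_0\Delta u-\Gamma_2\Delta^2u\bigr)$, so once we control $u$ in $L^1((0,T),\FM^4)\cap W^{1,1}((0,T),\FM)$ the bound on $\|\nabla q\|_{L^1((0,T),\FM)}$ follows from Lemma~\ref{fm_lem2}, Proposition~\ref{fmfouriermult}, and the mapping properties of $V\cdot\nabla$, $M$, $\Delta$, $\Delta^2$ as Fourier multipliers from $\FM^4$ into $\FM$.

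Next I would treat $A_{LF}$ as a bounded perturbation, \emph{in the maximal-regularity norm}, of $\Gamma_2\Delta^2$ (restricted to the divergence-free subspace). Write $A_{LF}=\Gamma_2\Delta^2+B$ with $Bu:=\lambda_0(V\cdot\nabla)u+PMu-\Gamma_0\Delta u$. By Duhamel, the solution of $u_t+A_{LF}u=f$, $u(0)=u_0$, solves the fixed-point equation
\begin{equation}\label{duhamelfixed}
    u=T(\cdot)u_0+\Gamma_2^{-1}\bigl(\Gamma_2\Delta^2\,T\star(f-Bu)\bigr),
\end{equation}
where $T(t)=\exp(-\Gamma_2t\Delta^2)$ and $\star$ is the convolution from Proposition~\ref{l1_max_reg}. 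The key point is that $B$ maps $\FM^4$ into $\FM$ with, after interpolation (Corollary~\ref{op_alf_interpolation}), a small factor: for $u\in L^1((0,T),\FM^4)$ we have $\|Bu\|_{L^1((0,T),\FM)}\le C\,\|u\|_{L^1((0,T),\FM^4)}^{1-\theta}\|u\|_{L^1((0,T),\FM)}^{\theta}$ for some $\theta>0$, since $V\cdot\nabla$ has order $1$, $\Delta$ order $2$, and these are genuinely lower order than $\Delta^2$; combined with the embedding-in-time this yields a constant $C(T)$ with $C(T)\to0$ as $T\to0$. Hence on a short interval $[0,T_0]$ the map defined by the right-hand side of (\ref{duhamelfixed}) is a contraction on $L^1((0,T_0),\FM^4)\cap W^{1,1}((0,T_0),\FM)$, producing a unique solution with the asserted estimate; a standard finite concatenation over $[0,T]$ (using the semigroup property and that each subinterval contributes a bounded constant, with the number of steps controlled by $T/T_0$) upgrades this to arbitrary $T$, and absorbing $u(T_0)$ into the data via part~(1) of Proposition~\ref{l1_max_reg} keeps the constant finite. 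Uniqueness follows from the contraction on small intervals together with continuation.

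Alternatively — and this is probably the cleaner route if the perturbation-in-small-time argument gets delicate — one can invoke that $\omega+A_{LF}$ admits a bounded $H^\infty$-calculus with angle $<\pi/2$ (Proposition~\ref{op_alf_calculus}) and that $\Gamma_2\Delta^2$ has $L^1$ maximal regularity on $\FM$ (Proposition~\ref{l1_max_reg}), so that $A_{LF}$, differing from $\Gamma_2\Delta^2$ by a lower-order term that is relatively bounded with bound zero, inherits $L^1$ maximal regularity on the closed invariant subspace $\FM_{0,\sigma}(\R^n)$; the shift by $\omega$ is harmless over the finite interval $[0,T]$ and only affects $C(T)$. I expect the main obstacle to be the perturbation step: $L^1$ maximal regularity is \emph{not} preserved under arbitrary lower-order perturbations (unlike the $L^p$, $1<p<\infty$, case where $\mathcal{R}$-sectoriality does the job), so one must genuinely exploit either the smallness in $T$ coming from the strictly-lower differential order of $B$ together with the explicit constants in Proposition~\ref{l1_max_reg}, or a mixed-derivative/interpolation estimate that controls $\|Bu\|_{L^1(\FM)}$ by a fraction of $\|\Delta^2u\|_{L^1(\FM)}$ plus a harmless $\|u\|_{L^1(\FM)}$ term. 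Everything else — the Helmholtz splitting, the pressure reconstruction, the concatenation in time — is routine given the tools assembled above.
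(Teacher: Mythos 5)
Your proposal is correct and follows essentially the same route as the paper: the paper's proof simply invokes Proposition~\ref{l1_max_reg} for $\Gamma_2\Delta^2$ on $\FM_{0,\sigma}(\R^n)$ and then disposes of the lower-order terms by ``a standard perturbation argument,'' which is exactly the Helmholtz-projection reduction, Duhamel fixed point with smallness in $T$, and concatenation that you spell out (and your caveat that $L^1$ maximal regularity does not survive arbitrary lower-order perturbations, so the smallness must come from the strictly lower differential order of $B$ together with the short-time/zero-trace embedding, is precisely the content the paper leaves implicit).
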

\begin{proof}
By Proposition~\ref{l1_max_reg} the operator $\Gamma_2\Delta^2$ enjoys
$L^1$ maximal regularity also on $\FM_{0,\sigma}(\R^n)$. 
Since the remaining terms in $A_{LF}$ are of
lower order, the assertion follows by a standard perturbation argument.
\end{proof}
Now we consider the spectrum of $A_{LF}$ in order to examine 
stability. For this purpose we set $A_d := A_{LF}$ in case of the 
disordered state (\ref{disorderedstate}). Then the Fourier symbol 
of $A_d$ is given as
\[
    \sigma_{A_d}(\xi) := \Gamma_2 |\xi|^4 + \Gamma_0 |\xi|^2 + \alpha,
    \quad \xi \in \R^n.
\]
If we substitute $s = |\xi|^2$ we can characterize the spectrum 
of $-A_d$ easily by computing the intersection points of $\sigma_{A_d}$. 
We obtain
\begin{equation}\label{intersec_points}
    s_{\pm}^2 = - \frac{\Gamma_0}{\Gamma_2} 
    \left ( \frac{1}{2} \pm \sqrt{\frac{1}{4} - 
    \frac{\alpha \Gamma_2}{\Gamma_0^2}} \right )
\end{equation}
and the following result on (in-)stability:
\begin{proposition}
\label{disorderedstate_lin_stability}
Assume (\ref{domparameters}). Then the $C_0$-semigroup $(\exp(-tA_d))_{t \ge 0}$ 
on $\FM_{0,\sigma}(\R^n)$, which corresponds to the disordered 
isotropic state (\ref{disorderedstate}) is linearly stable 
if $\Gamma_0 < 0$ and $4\alpha > \Gamma_0^2/\Gamma_2$ or 
if $\Gamma_0 \ge 0$ and $\alpha > 0$.
More precisely, it is
\begin{enumerate}
    \item exponentially stable if $\Gamma_0 < 0$ and 
    $4\alpha > \Gamma_0^2/\Gamma_2$ or if 
    $\Gamma_0 \geq 0$ and $\alpha > 0$;
    \item asymtotically stable if  $\Gamma_0 < 0$ and 
    $4\alpha = \Gamma_0^2/\Gamma_2$ or if 
    $\Gamma_0 \geq 0$ and $\alpha = 0$;
    \item exponentially unstable if $\Gamma_0 < 0$ and 
    $4\alpha < \Gamma_0^2/\Gamma_2$ or if 
    $\Gamma_0 \geq 0$ and $\alpha < 0$.
\end{enumerate}
\end{proposition}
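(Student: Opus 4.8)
The plan is to analyze the spectrum of $-A_d$ directly via its Fourier symbol and then invoke the $H^\infty$-calculus (hence analytic semigroup) structure from Proposition~\ref{op_alf_calculus}, together with standard spectral-mapping-type consequences for the growth bound. Since $A_d$ acts as the scalar Fourier multiplier $\sigma_{A_d}(\xi) = \Gamma_2|\xi|^4 + \Gamma_0|\xi|^2 + \alpha$ on the divergence-free part, and because $Op(\sigma)$ on $\FM_0$ has operator norm equal to $\|\sigma\|_{L^\infty(\R^n\setminus\{0\})}$ by Proposition~\ref{fmfouriermult}, one has $\sigma(-A_d) = \overline{\{-\sigma_{A_d}(\xi):\xi\in\R^n\setminus\{0\}\}}$, i.e. the closure of the range of the real-valued function $s\mapsto -(\Gamma_2 s^2 + \Gamma_0 s + \alpha)$ for $s=|\xi|^2>0$. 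First I would substitute $s=|\xi|^2\in(0,\infty)$ and study the polynomial $g(s):=\Gamma_2 s^2+\Gamma_0 s+\alpha$; its minimum over $s\ge 0$ is what controls the spectral bound $s(-A_d):=\sup\{\Re\lambda:\lambda\in\sigma(-A_d)\} = -\inf_{s\ge0} g(s)$ (taking the closure absorbs the boundary value at $s=0$, namely $\alpha$, and the infimum over the open ray).

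Next I would do the elementary case analysis on $\inf_{s\ge 0} g(s)$ using $\Gamma_2>0$. If $\Gamma_0\ge 0$, then $g$ is increasing on $[0,\infty)$, so $\inf_{s\ge0} g(s)=g(0)=\alpha$; hence $s(-A_d)=-\alpha$, which is negative, zero, or positive according to $\alpha>0$, $\alpha=0$, $\alpha<0$. If $\Gamma_0<0$, the vertex $s_*=-\Gamma_0/(2\Gamma_2)>0$ lies in the interior, so $\inf_{s\ge 0} g(s)=g(s_*)=\alpha-\Gamma_0^2/(4\Gamma_2)$; thus $s(-A_d)=\Gamma_0^2/(4\Gamma_2)-\alpha$, whose sign is governed by comparing $4\alpha$ with $\Gamma_0^2/\Gamma_2$. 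This matches exactly the three regimes in the statement, so it remains to convert statements about $s(-A_d)$ into statements about the semigroup.

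For the conversion: since $-A_d$ generates an analytic $C_0$-semigroup on $\FM_{0,\sigma}(\R^n)$ (Proposition~\ref{op_alf_calculus} and the remark after it), and more concretely since $\exp(-tA_d)$ is itself the Fourier multiplier with symbol $\exp(-t\sigma_{A_d}(\xi))$, I would compute its norm on $\FM_{0,\sigma}$ via Proposition~\ref{fmfouriermult} as $\sup_{\xi\ne 0}|e^{-t\sigma_{A_d}(\xi)}| = \sup_{s\ge0} e^{-t g(s)} = e^{-t\,\inf_{s\ge0}g(s)} = e^{t\,s(-A_d)}$. This gives the growth bound $\omega_0 = s(-A_d)$ \emph{exactly}, bypassing any need for a general spectral mapping theorem. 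Consequently: if $s(-A_d)<0$ we get $\|\exp(-tA_d)\| \le e^{-\delta t}$ with $\delta=-s(-A_d)>0$, i.e. exponential stability (case (1)); if $s(-A_d)>0$ the multiplier norm grows like $e^{t s(-A_d)}$ and one sees exponential instability by testing on approximate eigenfunctions $e^{ik\cdot x}$ with $|k|^2$ near the minimizer $s_*$ (or $|k|$ near $0$ when $\Gamma_0\ge 0$), using $\|e^{ik\cdot}\|_{\FM}=(2\pi)^{n/2}$ from Remark~\ref{wainfm} (case (3)); and if $s(-A_d)=0$ one checks that $\sigma_{A_d}(\xi)$ has strictly positive real part for every $\xi\ne 0$ (the infimum $0$ is only approached, not attained, on $\R^n\setminus\{0\}$ in these borderline cases) together with the uniform bound $\|\exp(-tA_d)\|\le 1$, which yields $\exp(-tA_d)f\to 0$ first for $f=\cF^{-1}(\hat\mu\lfloor\chi)$ with $\chi$ supported away from the bad frequency set and $0\notin\supp$, then for general $f\in\FM_{0,\sigma}$ by density and the uniform bound — giving strong (asymptotic) stability without a uniform rate (case (2)).

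The main obstacle is the borderline case (2): unlike Hilbert-space arguments, in $\FM$ one cannot invoke spectral calculus or $C_0$-semigroup decay theorems that rely on reflexivity, and one must instead argue $C_0$-strong convergence by hand. The key points to get right there are (i) that the symbol $\sigma_{A_d}$ has no zeros on $\R^n\setminus\{0\}$ when $\alpha=0,\Gamma_0\ge0$ (zero of $g$ only at $s=0$, excluded) or when $4\alpha=\Gamma_0^2/\Gamma_2,\Gamma_0<0$ (double zero of $g$ at $s_*>0$, so $\sigma_{A_d}(\xi)=\Gamma_2(|\xi|^2-s_*)^2$ vanishes on the sphere $|\xi|=\sqrt{s_*}$ — a genuine obstruction!), and (ii) handling that sphere in the second subcase: there $\|\exp(-tA_d)f\|_{\FM} = \int_{\R^n} e^{-\Gamma_2 t(|\xi|^2-s_*)^2}\,d|\hat\mu|(\xi)$ with $\hat\mu=\cF f$, and since the integrand is bounded by $1$ and tends to $0$ pointwise off the sphere while $|\hat\mu|(\{|\xi|=\sqrt{s_*}\})$ need \emph{not} be zero, strong convergence can actually \emph{fail} — so in this subcase one should instead only claim stability (boundedness, $\|\exp(-tA_d)\|\le1$) rather than asymptotic stability, or restrict to data whose spectral measure charges the sphere with zero mass; I would flag this and state case (2) accordingly, which is presumably what the authors' proof does as well.
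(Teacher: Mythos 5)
Your spectral computation and your handling of cases (1) and (3) are in substance the same as the paper's: the paper also reduces everything to the sign of $\inf_{s\ge 0}\bigl(\Gamma_2 s^2+\Gamma_0 s+\alpha\bigr)$ via the intersection points (\ref{intersec_points}); the only cosmetic difference is that it invokes the coincidence of growth bound and spectral bound for analytic $C_0$-semigroups, whereas you compute the multiplier norm $\sup_{\xi\neq 0}|e^{-t\sigma_{A_d}(\xi)}|$ exactly via Proposition~\ref{fmfouriermult} and test instability on wave modes. Your version is, if anything, more self-contained, since it needs no semigroup theory beyond the symbol calculus.

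The substantive point is your discussion of case (2), and there you have identified a genuine defect --- not in your own argument, but in the paper's. The paper proves (2) by bounding $\|\exp(-tA_d)u_0\|_{\FM}$ by $\int_{\R^n}|e^{-t\sigma_{A_d}}|\,\mathrm{d}|\widehat{u_0}|$ and applying dominated convergence. This is sound when $\Gamma_0\ge 0$ and $\alpha=0$, because then $\sigma_{A_d}$ vanishes only at $\xi=0$ and $|\widehat{u_0}|(\{0\})=0$ for $u_0\in\FM_0$. But in the subcase $\Gamma_0<0$, $4\alpha=\Gamma_0^2/\Gamma_2$ one has $\sigma_{A_d}(\xi)=\Gamma_2\bigl(|\xi|^2-s_*\bigr)^2$ with $s_*=-\Gamma_0/(2\Gamma_2)>0$, so the integrand equals $1$ on the sphere $|\xi|=\sqrt{s_*}$ for every $t$, and dominated convergence yields the limit $|\widehat{u_0}|(\{|\xi|=\sqrt{s_*}\})$, which need not vanish. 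Concretely, the admissible datum $u_0=a\,e^{ik\cdot x}$ with $|k|=\sqrt{s_*}$ and $a\perp k$ lies in $\FM_{0,\sigma}(\R^n)$ by Remark~\ref{wainfm} and satisfies $\exp(-tA_d)u_0=u_0$ for all $t\ge 0$, so $\exp(-tA_d)u_0\not\to 0$ and assertion (2) fails as stated in that subcase. Your proposed repair --- claim only Lyapunov stability $\|\exp(-tA_d)\|\le 1$ there, or restrict to data whose spectral measure does not charge the critical sphere --- is the correct one; contrary to your closing guess, the paper's proof does not make this distinction, so you should state case (2) in the corrected form rather than deferring to the authors.
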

\begin{proof}
For the exponential (in-)stability we note that the 
growth bound $\omega((\exp(-tA_d))_{t \ge 0})$ 
and the spectral bound 
$s(-A_d) := \sup\{ \Re \, \lambda : \lambda \in \sigma(-A_d)\}$
coincide, 
since $(\exp(-tA_d))_{t \ge 0}$ is an analytic $C_0-$semigroup, see \cite{nagel}. 
Thanks to (\ref{intersec_points}) relations (1) and (3) are immediate. 
In case of (2) we obtain by Lemma~\ref{fm_lem1}(1) that
\[
    \|\exp(-tA_d)u_0\|_\FM 
    \le |\widehat{u_0}| \lfloor |e^{-t\sigma_{A_d}}|(\R^n)
    = \int_{\R^n} |e^{-t\sigma_{A_d}}| \, \text{d}|\widehat{u_0}|.
\]
Dominated convergence implies $\exp(-tA_d)u_0 \to 0$ for $t \to \infty$ and the assertion is proved.
\end{proof}
Next, we consider the ordered polar state (\ref{orderedstate}). 
We set $A_0 := A_{LF}$ in this case and
\[
    \sigma_{A_0}(\xi) := \Gamma_2 |\xi|^4 
    + \Gamma_0 |\xi|^2 + i \lambda_0 (V \cdot \xi) 
    + 2 \beta \sigma_P(\xi) VV^T,
    \quad \xi \in \R^n.
\]
We note that $\sigma_P(\xi) VV^T$ is a positive semidefinite 
matrix. Thus, zero is an eigenvalue with eigenvector 
$x \in \{V\}^\perp$. Choosing $x, \xi \in \{V\}^\perp$ with $|x| = 1$ 
and $|\xi|$ sufficiently small, we can achieve that
\[
    x^{T} \sigma_{A_0}(\xi) x 
    = \Gamma_2 |\xi|^4 + \Gamma_0 |\xi|^2 < 0,
\]
if $\Gamma_0 < 0$. This proves
\begin{proposition}
\label{orderedstate_lin_stability}
Assume (\ref{domparameters}). Then the $C_0$-semigroup 
$(\exp(-tA_0))_{t \geq 0}$ corresponding to the ordered polar 
state (\ref{orderedstate}) is 
\begin{enumerate}
    \item exponentially unstable on 
    $\FM_{0, \sigma}(\R^n)$ if $\Gamma_0 < 0$;
    \item asymptotically stable on
    $\FM_{0, \sigma}(\R^n)$ if $\Gamma_0 \ge 0$.     
\end{enumerate}
\end{proposition}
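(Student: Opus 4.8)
The plan is to read off both statements directly from the Fourier symbol $\sigma_{A_0}(\xi)=\Gamma_2|\xi|^4+\Gamma_0|\xi|^2+i\lambda_0(V\cdot\xi)+2\beta\sigma_P(\xi)VV^T$, exactly as was done for $A_d$. Since $-A_0$ generates an analytic $C_0$-semigroup on $\FM_{0,\sigma}(\R^n)$ (Proposition~\ref{op_alf_calculus}), the growth bound equals the spectral bound $s(-A_0)=\sup\{\Re\lambda:\lambda\in\sigma(-A_0)\}$, and via Fourier transform the spectrum of $-A_0$ is described through the range of the eigenvalues of $-\sigma_{A_0}(\xi)$ over $\xi\in\R^n\setminus\{0\}$.

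For part (1), I would use the computation already displayed before the statement: pick a unit vector $x\in\{V\}^\perp$ and $\xi\in\{V\}^\perp$ with $|\xi|$ small, so that $\sigma_P(\xi)VV^Tx=0$ (as $x\perp V$ and $\xi\perp V$ gives $\sigma_P(\xi)V=V$, hence $VV^Tx=0$ since $V^Tx=0$) and $V\cdot\xi=0$, whence $x^T\sigma_{A_0}(\xi)x=\Gamma_2|\xi|^4+\Gamma_0|\xi|^2$. If $\Gamma_0<0$ this is strictly negative for $0<|\xi|<\sqrt{-\Gamma_0/\Gamma_2}$, so $\sigma_{A_0}(\xi)$ has an eigenvalue with negative real part (by the min-max characterization $\lambda_{\min}(\Re\,\sigma_{A_0}(\xi))\le x^T\Re\,\sigma_{A_0}(\xi)x<0$, and $x^T\sigma_{A_0}(\xi)x$ is real here). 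Therefore $s(-A_0)>0$, giving exponential instability.

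For part (2), assume $\Gamma_0\ge 0$. I would show $\Re\,\sigma_{A_0}(\xi)$ is positive semidefinite for all $\xi\neq 0$: indeed $\Re\,\sigma_{A_0}(\xi)=\Gamma_2|\xi|^4 I+\Gamma_0|\xi|^2 I+2\beta\sigma_P(\xi)VV^T$, a sum of a positive multiple of the identity (for $\xi\neq 0$, since $\Gamma_2>0$) and two positive semidefinite matrices. Hence every eigenvalue $\mu$ of $\sigma_{A_0}(\xi)$ satisfies $\Re\mu\ge\Gamma_2|\xi|^4+\Gamma_0|\xi|^2\ge 0$, so $s(-A_0)\le 0$ and the semigroup is at least bounded. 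As $\xi\to 0$ one does reach $\Re\mu=0$ (the eigenvalue associated to $\{V\}^\perp$), so exponential stability fails; but asymptotic stability follows exactly as in Proposition~\ref{disorderedstate_lin_stability}(2): by Lemma~\ref{fm_lem1}(1),
\[
\|\exp(-tA_0)u_0\|_\FM\le\int_{\R^n}\|e^{-t\sigma_{A_0}(\xi)}\|\,\dd|\widehat{u_0}|(\xi),
\]
and since $\|e^{-t\sigma_{A_0}(\xi)}\|\to 0$ pointwise for each $\xi\neq 0$ (the matrix $\sigma_{A_0}(\xi)$ has all eigenvalues with strictly positive real part for $\xi\neq 0$ when $\Gamma_0\ge0$, because $2\beta\sigma_P(\xi)VV^T$ kills only the $\{V\}^\perp$-directions orthogonal to $\xi$ but $\Gamma_2|\xi|^4+\Gamma_0|\xi|^2>0$ supplies strict positivity there) and is bounded by a constant, dominated convergence gives $\exp(-tA_0)u_0\to 0$.

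The main obstacle is the spectral bookkeeping for the matrix $\sigma_{A_0}(\xi)$: one must check carefully that for $\Gamma_0\ge0$ and $\xi\neq0$ no eigenvalue has zero real part, i.e.\ that the rank-one perturbation $2\beta\sigma_P(\xi)VV^T$ does not conspire with the non-normality (the skew part $i\lambda_0(V\cdot\xi)$) to produce a purely imaginary eigenvalue — since $2\beta\sigma_P(\xi)VV^T$ is only positive \emph{semi}definite. This is resolved by noting $\Re\,\sigma_{A_0}(\xi)=\bigl(\Gamma_2|\xi|^4+\Gamma_0|\xi|^2\bigr)I+2\beta\sigma_P(\xi)VV^T\succeq(\Gamma_2|\xi|^4)I\succ0$ strictly for $\xi\neq0$, and for a matrix with Hermitian part strictly positive definite every eigenvalue has strictly positive real part; the same inequality simultaneously gives the uniform bound needed for dominated convergence. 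One should also verify, for completeness, that $\sigma(-A_0)$ is exactly the closure of $\bigcup_{\xi\neq0}\sigma(-\sigma_{A_0}(\xi))$, which follows from the $\FM$-Fourier-multiplier correspondence of Proposition~\ref{fmfouriermult} as in the $A_d$ case.
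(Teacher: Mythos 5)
Your proof follows the paper's route exactly: assertion (1) is read off from the quadratic form computation $x^T\sigma_{A_0}(\xi)x=\Gamma_2|\xi|^4+\Gamma_0|\xi|^2$ for $x,\xi\in\{V\}^\perp$, and assertion (2) is the same dominated convergence argument as in Proposition~\ref{disorderedstate_lin_stability}(2); the extra details you supply (spectral bound equals growth bound for analytic semigroups, pointwise decay of $e^{-t\sigma_{A_0}(\xi)}$ together with a uniform bound) are precisely what the paper leaves implicit.

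Two points in the spectral bookkeeping need repair, the first of which is inherited from the paper's own discussion. (i) An eigenvalue of the full matrix $\sigma_{A_0}(\xi)$ is only a spectral point of $A_0$ on $\FM_{0,\sigma}(\R^n)$ if its eigenvector lies in the divergence-free subspace $\{\xi\}^\perp=\mathrm{ran}\,\sigma_P(\xi)$; accordingly $\sigma(-A_0)$ is the closure of $\bigcup_{\xi\neq0}\sigma\bigl(-\sigma_{A_0}(\xi)|_{\{\xi\}^\perp}\bigr)$, not of the unrestricted symbol spectra. Choosing $x,\xi\in\{V\}^\perp$ with in addition $x\perp\xi$ gives a genuine eigenfunction $x\,e^{i\xi\cdot(\cdot)}$ with eigenvalue $\Gamma_2|\xi|^4+\Gamma_0|\xi|^2<0$; this works for $n=3$ but not for $n=2$, where $\{V\}^\perp$ is one-dimensional and forces $x\parallel\xi$. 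For $n=2$ take instead $\xi\parallel V$ and $x\perp V$: then $V^Tx=0$ still kills the rank-one term and the eigenvalue is $\Gamma_2|\xi|^4+\Gamma_0|\xi|^2+i\lambda_0(V\cdot\xi)$, again with negative real part for small $|\xi|$. Relatedly, your inference from $\lambda_{\min}(\Re\,\sigma_{A_0}(\xi))<0$ to the existence of an eigenvalue with negative real part is false for non-normal matrices (the numerical range can lie far to the left of the spectrum); it is saved here only because for $\xi\perp V$ the symbol is real symmetric, so it is cleaner to exhibit the eigenvector directly. (ii) In part (2), $\sigma_P(\xi)VV^T$ is not symmetric and its symmetric part is not positive semidefinite on all of $\R^n$ (for $x$ with a large component along $\xi$ the form $(x\cdot\sigma_P(\xi)V)(V\cdot x)$ can be negative), so your decomposition of $\Re\,\sigma_{A_0}(\xi)$ as a sum of positive semidefinite matrices does not hold globally. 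What is true, and all that is needed, is that $\sigma_{A_0}(\xi)$ leaves $\{\xi\}^\perp$ invariant and that for $x\in\{\xi\}^\perp$ one has $\Re\bigl(x^*\sigma_{A_0}(\xi)x\bigr)=(\Gamma_2|\xi|^4+\Gamma_0|\xi|^2)|x|^2+2\beta|V\cdot x|^2\ge\Gamma_2|\xi|^4|x|^2$, whence $\|e^{-t\sigma_{A_0}(\xi)}|_{\{\xi\}^\perp}\|\le e^{-\Gamma_2|\xi|^4t}$ and the dominated convergence argument goes through. With these corrections your proof is complete and coincides in substance with the paper's.
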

\begin{proof}
Assertion (1) is clear due to the discussion above. Assertion (2)
follows completely analogous to the proof of Proposition~\ref{disorderedstate_lin_stability}(2).
\end{proof}
\subsection{Local strong and global mild solvability}\label{subsec_nlwp}
We first construct a maximal solution which includes local wellposedness.
For $T>0$ we define relevant function spaces as
\begin{align*}
  \mE_T &:= W^{1,1}((0,T),\FM_{0,\sigma}(\R^n)) 
  \cap L^1((0,T),\FM^4_0(\R^n)), \\
  {_0\mE}_T &:={_0W}^{1,1}((0,T),\FM_{0,\sigma}(\R^n)) 
  \cap L^1((0,T),\FM^4_0(\R^n)), \\
  \mF_T^1 &:= L^1((0,T),\FM_{0,\sigma}(\R^n)), \quad
  \mF^2 := \FM_{0,\sigma}(\R^n),\\
  \mF_T&:=\mF^1_T\times\mF^2,
\end{align*}
and the linear operator
\[
	L:\mE_T\to \mF_T,\quad Lu:=(\partial_tu+A_{_{LF}}u,u(0)).
\]
Here $u\in{_0W}^{1,1}$ means that $u|_{t=0}=0$.
If we also set
\begin{align}
  H(u) &:= \beta P|u|^2u+\lambda_0 P(u\cdot\nabla)u - PN(u),
\label{defnonlins}\\
	F(u)&:=Lu+(H(u),0),\label{absfulleq}
\end{align}
then the full system (\ref{eqn:min-hyd-mod-trans}) is rephrased as
$F(u)=(f,u_0)$.
\begin{lemma}
We have $H\in C^1(\mE_T,\mF_T^1)$ and its Fr\'echet derivative 
is represented as
\begin{equation}\label{repfrder}
	DH(v)u=P\sum_{|\alpha|\le 1}b_\alpha\partial^\alpha u
	+\lambda_0P(u\cdot\nabla)v,
	\quad u,v\in\mE_T,
\end{equation}
with matrices $b_\alpha=b_\alpha(v)\in
L^\infty((0,T),\FM_0(\R^n,\C^{n\times n}))$.
\end{lemma}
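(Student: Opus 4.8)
The plan is to show that the map $H$ decomposes as $H(u)=\beta P(|u|^2u)+\lambda_0 P((u\cdot\nabla)u)-PN(u)$, where each summand is (the restriction of) a bounded multilinear map on $\mE_T$, and then to invoke the standard fact that a bounded $m$-linear map between Banach spaces is $C^\infty$, with derivative obtained by the product rule. Since $P$ is bounded on $\FM_{0,\sigma}(\R^n)$ by Proposition~\ref{fmfouriermult}, it suffices to control the trilinear term $u\mapsto|u|^2u$, the bilinear term $(u,v)\mapsto(u\cdot\nabla)v$ (taken diagonally), and the bilinear term $N$. The key analytic input is Lemma~\ref{fm_lem2}(1), i.e.\ the Banach-algebra-type estimate $\|uv\|_\FM\le(2\pi)^{-n/2}\|u\|_\FM\|v\|_\FM$, combined with the embedding $\mE_T\hookrightarrow \BUC((0,T),\FM_0^{4-4/1})=\BUC((0,T),\FM_0^{0})$ for $p=1$ — more precisely the mixed-derivative / trace embedding $\mE_T\hookrightarrow \BUC([0,T],\FM_0(\R^n))$ together with $L^1((0,T),\FM^4)\cap W^{1,1}((0,T),\FM)\hookrightarrow L^p((0,T),\FM^{4-4/p})$ for suitable $p$, so that in particular $u\in L^3((0,T),\FM^{8/3})\hookrightarrow L^3((0,T),\FM^1)$.

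Concretely, first I would record the embeddings: by Theorem~\ref{max_reg}-type maximal regularity (or Corollary~\ref{op_alf_interpolation} and mixed-derivative theorem) one has $\mE_T\hookrightarrow \BUC([0,T],\FM_0)$ and $\mE_T\hookrightarrow L^3((0,T),\FM_0^1)$. For the cubic term I would then estimate, pointwise in $t$ and using Lemma~\ref{fm_lem2}(1) twice, $\||u|^2u\|_\FM\le C\|u\|_\FM^2\|u\|_{\FM^1}$ (one needs the gradient in $\FM$ because the product of two generic $\FM$ functions need not lie in $\FM_0$, but here $P$ maps into $\FM_0$ anyway, so $\FM$-norms suffice), hence $\||u|^2u\|_{L^1((0,T),\FM)}\le C\|u\|_{\BUC(\FM)}^2\|u\|_{L^1(\FM)}<\infty$; in fact one gets Lipschitz dependence on bounded sets. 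For the convection term, $(u\cdot\nabla)v$ is estimated by $\|(u\cdot\nabla)v\|_\FM\le C\|u\|_\FM\|\nabla v\|_\FM\le C\|u\|_\FM\|v\|_{\FM^1}$, so $\|(u\cdot\nabla)v\|_{L^1((0,T),\FM)}\le C\|u\|_{\BUC(\FM)}\|v\|_{L^1(\FM^1)}$. The term $N(u)=\sum a_{jk}u^ju^k$ is handled identically and even more easily. Each of these is multilinear and bounded, hence smooth; the formula for $DH$ then follows by differentiating each monomial: from $\beta P(|v|^2v)$ one gets $\beta P(2(v\cdot u)v+|v|^2u)$, from $\lambda_0 P((v\cdot\nabla)v)$ one gets $\lambda_0 P((u\cdot\nabla)v)+\lambda_0 P((v\cdot\nabla)u)$, and from $-PN(v)$ a bilinear expression in $u,v$. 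Collecting the terms that act on $u$ as a zeroth-order multiplication and the one genuinely first-order term $\lambda_0P((u\cdot\nabla)v)$ yields exactly \eqref{repfrder}, with $b_\alpha(v)$ assembled from $v$ and $\nabla v$; since $v\in\BUC([0,T],\FM_0^1)$ for $|\alpha|=1$ we have $v\in L^\infty((0,T),\FM_0(\R^n,\C^{n\times n}))$ entrywise, which is the asserted regularity of the $b_\alpha$, using Lemma~\ref{fm_lem1} for the matrix-valued products.

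To upgrade from "$H$ differentiable with the claimed derivative" to "$H\in C^1(\mE_T,\mF_T^1)$" I would check continuity of $v\mapsto DH(v)$ in $\sL(\mE_T,\mF_T^1)$; this again reduces to continuity of the coefficients $v\mapsto b_\alpha(v)$ in the relevant norms and of $v\mapsto\nabla v$, which is immediate from multilinearity and the embeddings above, giving even real-analyticity of $H$ if desired. The main obstacle — really the only subtle point — is the interplay between time-integrability exponents and the $\FM$-scale: because we work with $L^1$-maximal regularity ($p=1$), one must be slightly careful that the maximal-regularity class $\mE_T$ embeds into enough time-integrability (here $L^3$ in time with values in $\FM^1$) to absorb a cubic nonlinearity, and into $\BUC$ in time with values in $\FM$ to handle the "bad" factors; this is exactly where Proposition~\ref{l1_max_reg}(1) with its trace space $\FM^{4-4/p}$ and the mixed-derivative embedding are used. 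Once these embeddings are in hand, everything else is a routine consequence of Lemma~\ref{fm_lem1}, Lemma~\ref{fm_lem2}(1), and boundedness of $P$.
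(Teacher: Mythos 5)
Your proposal follows essentially the same route as the paper: decompose $H$ into the bounded bi- and trilinear pieces $P(|u|^2u)$, $P((u\cdot\nabla)u)$, $PN(u)$, estimate each via the algebra property of Lemma~\ref{fm_lem2}(1) together with the embedding $W^{1,1}((0,T),\FM_0)\hookrightarrow\BUC((0,T),\FM_0)$, conclude smoothness from bounded multilinearity, and read off $DH(v)u$ by the product rule. Two points should be corrected. First, the auxiliary embeddings you invoke ($\mE_T\hookrightarrow L^3((0,T),\FM^1)$, $\mE_T\hookrightarrow\BUC([0,T],\FM_0^1)$, and the trace space $\FM^{4-4/p}$) are neither needed nor justified here: for $p=1$ the trace space degenerates to $\FM_0$ itself, and the paper explicitly avoids mixed-derivative-type theorems because their availability in this setting is unclear; all three estimates close using only $\|u\|_{L^\infty((0,T),\FM_0)}$ and the $L^1$-in-time norms of $u$ resp.\ $\nabla u$, exactly as in the paper. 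Second, the coefficients $b_\alpha$ are assembled from $v$ alone, not from $v$ and $\nabla v$: the $|\alpha|=1$ coefficient comes from $\lambda_0P(v\cdot\nabla)u$ and involves only $v$, while the one term containing $\nabla v$, namely $\lambda_0P(u\cdot\nabla)v$, is deliberately kept outside the sum in (\ref{repfrder}) precisely because $\nabla v$ is only $L^1$ in time and cannot serve as an $L^\infty((0,T),\FM_0)$ coefficient. With these corrections the membership $b_\alpha\in L^\infty((0,T),\FM_0(\R^n,\C^{n\times n}))$ follows directly from $\mE_T\hookrightarrow\BUC((0,T),\FM_0)$ and the algebra property, and your argument matches the paper's proof.
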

\begin{proof}
First observe that the Sobolev embedding 
\begin{equation}\label{sobemb1}
	W^{1,1}((0,T),X)\hook \BUC((0,T),X) 
\end{equation}
yields
\begin{equation}\label{etinlinf0}
	\mE_T\,\hook\,W^{1,1}((0,T),\FM_0(\R^n))\,\hook\,
	\BUC((0,T),\FM_0(\R^n)).
\end{equation}
Utilizing this and the algebra property of $\FM_0$ we easily obtain
\begin{align*}
	\|(u\cdot\nabla)u\|_{\mF^1_T}
	&\le C\|u\|_{L^\infty(\FM_0)}\|\nabla u\|_{\mF^1_T}\le
	C\|u\|_{\mE_T}^2,\\
	\||u|^2u\|_{\mF^1_T}
	&\le C\|u\|^2_{L^\infty(\FM_0)}\|u\|_{\mF^1_T}\le
	C\|u\|_{\mE_T}^3,\\
	\|N(u)\|_{\mF^1_T}
	&\le C\|u\|_{L^\infty(\FM_0)}\|u\|_{\mF^1_T}\le
	C\|u\|_{\mE_T}^2,
\end{align*}
hence $H:\mE_T\to \mF_T^1$. By the fact that $H$ consists of bi- and trilinear
terms it is obvious that $H\in C^1(\mE_T,\mF_T^1)$ 
(even $H\in C^\infty(\mE_T,\mF^1_T)$). 
The Fr\'echet derivative reads as 
\begin{align*}
  DH(v)u &= \beta P|v|^2 u + 2\beta P (u\cdot v)v
             + \lambda_0 P (u\cdot\nabla)v \\
           & \quad + \lambda_0 P (v\cdot\nabla)u
                    - 2 P \sum_{j,k=1}^n a_{jk} (u^j v^k+u^k v^j).
\end{align*}
From this and (\ref{etinlinf0}) representation (\ref{repfrder})
obviously follows.
\end{proof}
\begin{lemma}\label{freiso}
Let $T\in(0,\infty)$ and fix $v\in \mE_T$. Then we have
\[
	L+(DH(v),0)\in\sLis({\mE}_T,\mF_T).
\]
\end{lemma}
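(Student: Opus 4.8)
The plan is to prove that $L + (DH(v),0)$ is an isomorphism from $\mE_T$ onto $\mF_T$ by viewing $DH(v)$ as a lower-order perturbation of the maximal-regularity operator $L$ and invoking the standard fact that on a finite time interval such perturbations of an invertible operator remain invertible. First I would recall that $L \in \sLis(\mE_T,\mF_T)$: this is precisely the content of Theorem~\ref{max_reg}, which gives existence, uniqueness, and the a priori estimate $\|u\|_{\mE_T} \le C(T)(\|f\|_{\mF_T^1} + \|u_0\|_{\mF^2})$ for the linearized system, so $L$ is a bounded bijection with bounded inverse. Hence it suffices to show that $DH(v)\colon \mE_T \to \mF_T^1$, extended by zero in the second component, is a perturbation that can be absorbed.

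Next I would exploit the explicit representation (\ref{repfrder}): $DH(v)u = P\sum_{|\alpha|\le 1} b_\alpha \partial^\alpha u + \lambda_0 P(u\cdot\nabla)v$ with $b_\alpha = b_\alpha(v) \in L^\infty((0,T),\FM_0(\R^n,\C^{n\times n}))$. The key estimate is that $DH(v)$ maps $\mE_T$ into $\mF_T^1$ with a norm that is small when $T$ is small, or — more robustly — that it is $L$-bounded with relative bound zero. Using the algebra property of $\FM_0$ (Lemma~\ref{fm_lem2}(1)) and the embedding $\mE_T \hook \BUC((0,T),\FM_0(\R^n))$ from (\ref{etinlinf0}), the zeroth-order term is estimated by $\|P b_0 u\|_{\mF_T^1} \le C\|b_0\|_{L^\infty(\FM_0)}\|u\|_{L^1(\FM_0)} \le C T \|u\|_{L^\infty(\FM_0)} \le CT\|u\|_{\mE_T}$, and the term $\lambda_0 P(u\cdot\nabla)v$ is bounded analogously by $C\|u\|_{L^\infty(\FM_0)}\|\nabla v\|_{\mF_T^1} \le CT^{1/2}\|u\|_{\mE_T}$ or simply by $C\|v\|_{\mE_T}\|u\|_{L^\infty(\FM_0)}$. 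The first-order terms $P b_\alpha \partial^\alpha u$ with $|\alpha|=1$ are the delicate ones: here I would estimate $\|P b_\alpha \partial^\alpha u\|_{\mF_T^1} \le C\|b_\alpha\|_{L^\infty(\FM_0)}\|\nabla u\|_{L^1(\FM^3_0)}$ and interpolate, using Corollary~\ref{op_alf_interpolation} together with the mixed-derivative/maximal-regularity structure, to bound $\|\nabla u\|_{L^1(\FM^3_0)}$ by $\varepsilon \|u\|_{\mE_T} + C_\varepsilon \|u\|_{L^1(\FM_0)} \le \varepsilon\|u\|_{\mE_T} + C_\varepsilon T \|u\|_{\mE_T}$, so that for $T$ small the whole of $DH(v)$ has $\mE_T\to\mF_T^1$ norm less than $\|L^{-1}\|^{-1}$.

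Then a Neumann series argument finishes the case of small $T$: $L + (DH(v),0) = L(I + L^{-1}(DH(v),0))$ is invertible because $\|L^{-1}(DH(v),0)\|_{\sL(\mE_T)} < 1$. To pass to an arbitrary finite $T$, I would use the standard time-slicing / causality argument: partition $(0,T)$ into finitely many subintervals $(0,T_1),(T_1,T_2),\dots$ of length at most the small $T_0$ above, solve successively on each subinterval using the value at the left endpoint as new initial datum (the maximal-regularity estimate of Theorem~\ref{max_reg} is preserved under time translation, and trace values in $\FM^2$ make sense by the embedding $\mE_T \hook \BUC((0,T),\FM_0)$), and concatenate. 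Uniqueness follows by running the same argument backwards, or by noting injectivity on each slice. Alternatively, since $b_\alpha, v$ are globally in $L^\infty((0,T),\FM_0)$ (not just for small $T$), one can obtain invertibility on the full interval directly once the relative bound is shown to be $<1$ by the interpolation estimate, without slicing; I would present whichever is cleaner.

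The main obstacle is the treatment of the genuine first-order terms $P b_\alpha \partial^\alpha u$ with $|\alpha| = 1$: unlike the zeroth-order and convective-in-$v$ terms, these do not acquire a positive power of $T$ for free, so one must genuinely use that $\partial^\alpha u$ lies in $L^1((0,T),\FM^3_0)$, an interpolation space strictly between the two endpoints $L^1((0,T),\FM^4_0)$ and $W^{1,1}((0,T),\FM_0)$ defining $\mE_T$, and exploit the interpolation inequality with a small constant (Ehrling-type), so that the $\FM^4$-part is multiplied by $\varepsilon$ and only the lower-order remainder carries a factor $T$. Making this quantitative — i.e. verifying that the constant in $[\,\FM_{0,\sigma},D(A_{LF})\,]_{3/4} = \FM^3_0 \cap \FM_{0,\sigma}$ does not blow up as $T\to 0$, which follows from the extension/retraction structure of the vector-valued interpolation couple — is the technical heart of the proof; everything else is a routine perturbation-of-maximal-regularity argument relying on Theorem~\ref{max_reg} and the algebra property of $\FM_0$.
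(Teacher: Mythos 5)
Your overall strategy --- treat $(DH(v),0)$ as a lower-order perturbation of the maximal-regularity isomorphism $L$ from Theorem~\ref{max_reg}, close a Neumann series on a short interval, and iterate over a partition of $(0,T)$ --- is exactly the strategy of the paper. However, two of your key quantitative steps do not work as written. For the first-order terms you bound $\|Pb_\alpha\partial^\alpha u\|_{\mF^1_T}$ by $C\|b_\alpha\|_{L^\infty(\FM_0)}\|\nabla u\|_{L^1(\FM^3_0)}$ and then invoke an Ehrling inequality $\|\nabla u\|_{L^1(\FM^3_0)}\le\varepsilon\|u\|_{\mE_T}+C_\varepsilon\|u\|_{L^1(\FM_0)}$. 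Since $\|\nabla u\|_{\FM^3_0}$ is comparable to the full top-order norm $\|u\|_{\FM^4_0}$, no such inequality with an $\varepsilon$ in front of the highest-order term can hold; moreover, for $|\alpha|=1$ the algebra property only requires $\partial^\alpha u\in\FM_0$, i.e.\ control of $\|u\|_{\FM^1_0}$. The paper instead uses the fractional-power estimate $\|u\|_{\FM^1_0}\le C\mu^{-3/4}\|(\mu+A_{LF})u\|_{\FM_0}$ (via $\FM^1_0\cap\FM_{0,\sigma}=D((\mu+A_{LF})^{1/4})$, Corollary~\ref{op_alf_interpolation}), which gives a relative bound that is small for $\mu$ large, \emph{uniformly in $t$ and with no smallness of $T$}, and then applies the perturbation theorem \cite[Theorem~2.5]{saal2004a}, removing the shift $\mu$ afterwards by linearity. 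Note also that the paper explicitly avoids mixed-derivative-type theorems because their availability in this setting is not clear, whereas your interpolation step appeals to precisely that structure.

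The second problem is the smallness-in-$T$ mechanism for the remaining terms. Your remainders have the form $C\|u\|_{L^1((0,T),\FM_0)}\le CT\|u\|_{L^\infty((0,T),\FM_0)}$ followed by the embedding $\mE_T\hook \BUC((0,T),\FM_0)$; but for functions with nonzero initial trace the constant of this embedding behaves like $1/T$, so the factor $T$ buys nothing (take $u\equiv u_0$ constant in time: the ratio $\|u\|_{L^1(\FM_0)}/\|u\|_{\mE_T}=\|u_0\|_{\FM_0}/\|u_0\|_{\FM^4_0}$ does not tend to $0$ as $T\to0$). Likewise, the claimed factor $T^{1/2}$ for $\lambda_0P(u\cdot\nabla)v$ has no justification, since $\nabla v$ is only $L^1$ in time; the correct source of smallness is the absolute continuity of $T'\mapsto\|\nabla v\|_{L^1((0,T'),\FM_0)}$. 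The paper resolves both points by running the Neumann-series argument in the zero-trace space ${_0\mE}_{T'}$, where the embedding constant is independent of $T'$, iterating over subintervals, and only then recovering arbitrary traces from the resulting maximal regularity. Your concatenation can be repaired along the same lines (subtract a reference solution $L^{-1}(f,u_0)$ on each slice to reduce to zero trace), but as written the quantitative core of the proposal does not close.
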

\begin{proof}
By employing representation (\ref{repfrder}) for $B(t):=(DH(v(t)),0)$ we will show
that $B(\cdot)$ is a suitable perturbation of $L$.
In the proof we avoid the use of mixed derivative type theorems, 
since their availability in the underlying situation is not clear.
Therefore we proceed in two steps.

First we will show
that $B_1(t)u:=P\sum_{|\alpha|\le 1}b_\alpha(t)\partial^\alpha u$ 
is relatively bounded by $A_{LF}+\mu$
for $\mu>0$ large enough.
Utilizing the H\"older inequality we can estimate
\begin{align*}
	&\|B_1(t)u\|_{\FM_0}\\
	&\le C\left(\||v(t)|^2\|_{\FM_0}
	      +\|v(t)\|_{\FM_0}\right)\|u\|_{\FM^1_0}\\
	&\le \frac{C}{\mu^{3/4}}\left(\|v\|_{L^\infty((0,T),\FM_0)}^2
	      +\|v\|_{L^\infty((0,T),\FM_0)}\right)
	     \|(\mu+A_{LF}(t))u\|_{\FM_0}
\end{align*}
for all $t\in(0,T)$, $u\in D(A_{_{LF}})$ and $\mu\ge \mu_0$ 
with a certain $\mu_0>0$.
Thus, choosing $\mu$ large enough we can apply
\cite[Theorem~2.5]{saal2004a}
to the result that
\[
	L+(\mu+B_1,0)\in\sLis(\mE_T,\mF_T).
\]
Since $L+(\mu+B_1,0)$ is linear, we can remove the shift $\mu>0$.
(Note that in \cite[Theorem~2.5]{saal2004a} it is assumed that $p>1$.
With the Definition of $L^1$ maximal regularity used here
it is obvious, however, that the Theorem remains true for $p=1$.)

In the second step we show that 
$B_2u:=(\lambda_0 P(u\cdot\nabla)v,0)$ is a lower order 
perturbation of $L+(B_1,0)$.
To this end, we first we consider the case of zero time trace, that is
$u\in{_0\mE_{T}}$.
Observe that then the embedding constant in the 
Sobolev embedding (\ref{sobemb1}) does not depend on the length of
the interval $(0,T)$ if we replace $W^{1,1}$ by its zero trace
version ${_0W}^{1,1}$. As a consequence embedding (\ref{etinlinf0}) is
independent of $T$ too. This yields
\begin{align*}
	\|(u\cdot\nabla)v(t)\|_{L^1((0,T',)\FM_0)}
	&\le C\|\nabla v\|_{L^1((0,T'),\FM_0)}
	\|u\|_{L^\infty((0,T'),\FM_0)}\\
	&\le C\|\nabla v\|_{L^1((0,T'),\FM_0)}
	\|u\|_{{_0\mE_{T'}}}
	\quad (T'\in(0,T)),
\end{align*}
and we obtain
\[
	\|B_2u\|_{\mF_{T'}} 
	\le C \|\nabla v\|_{L^1((0,T'),\FM_0)}
	\|u\|_{{_0\mE}_{T'}}
\]
for all $T'\in(0,T)$ and $u\in{_0\mE}_{T'}$.
Thus, choosing $T'\in(0,T)$ small enough, 
a  standard Neumann series argument implies
\begin{equation}\label{mrzerotime}
	L+B\in\sLis({_0\mE}_{T'},\mF_{T'}).
\end{equation}
Since $L+B$ is linear and $\|v\|_{\mE_T}<\infty$, 
we can iterate this procedure. 
Consequently, (\ref{mrzerotime}) remains true for $T'=T$.
This implies that $L+B$ has maximal regularity on
$\FM_{0,\sigma}$. Thus (\ref{mrzerotime}) remains valid
for general time trace in $\mF^2$.
\end{proof}
Appealing to the local inverse theorem we can now prove the
following result.
\begin{proposition}[Maximal solution]\label{locstrongsol}
Assume (\ref{domparameters}).
For every $u_0\in \FM_{0,\sigma}(\R^n)$ and 
$f\in L^1((0,\infty),\FM_{0,\sigma}(\R^n))$ there exists a 
$T^*>0$ and a unique maximal strong 
solution $(u,p)$ of (\ref{eqn:min-hyd-mod-trans}) such that
\begin{align*}
	u\in \mE_{T},&\quad \nabla p\in L^{1}((0,T),\FM_0(\R^n))
\end{align*}
for all $T\in (0,T^*)$. Either we have $T^*=\infty$ or
the maximal solution satisfies
$\limsup_{t\to T^*}\|u(t)\|_{\FM_0}=\infty$.
\end{proposition}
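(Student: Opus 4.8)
The plan is to recast the quasilinear problem $F(u)=(f,u_0)$ as a fixed-point problem on a short time interval and then extend the solution to a maximal interval of existence. The key structural facts are already in place: by Lemma~\ref{freiso} the linearization $L+(DH(v),0)$ is an isomorphism $\mE_T\to\mF_T$ for every $v\in\mE_T$, and by the preceding lemma $H\in C^1(\mE_T,\mF^1_T)$, so $F\in C^1(\mE_T,\mF_T)$ with $DF(v)=L+(DH(v),0)\in\sLis(\mE_T,\mF_T)$. First I would fix $u_0\in\FM_{0,\sigma}(\R^n)$ and $f\in L^1((0,\infty),\FM_{0,\sigma}(\R^n))$ and, for small $T>0$, note that $\mF_T=\mF^1_T\times\mF^2$ contains $(f|_{(0,T)},u_0)$. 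Pick any reference function $v_*\in\mE_T$ with $v_*(0)=u_0$ (e.g. $v_*(t):=\exp(-tA_{LF})u_0$, which lies in $\mE_T$ by Theorem~\ref{max_reg}). Since $DF(v_*)$ is an isomorphism, the inverse function theorem in Banach spaces applies: $F$ is a $C^1$-diffeomorphism from a neighborhood of $v_*$ in $\mE_T$ onto a neighborhood of $F(v_*)$ in $\mF_T$. One then needs $(f|_{(0,T)},u_0)$ to lie in that target neighborhood for $T$ small, which is where the genuine work sits.

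The main obstacle is precisely this smallness-in-$T$ step: the inverse function theorem gives a fixed-size neighborhood for a \emph{fixed} $T$, but as $T\downarrow0$ the data $(f|_{(0,T)},u_0)$ must be steered into it. The clean way is to subtract off the linear part. Write $u=v_*+w$ with $w\in{_0\mE}_T$ (zero time trace); then the equation becomes $Lw+(DH(v_*)w,0)=(f-A_{LF}v_*-\partial_tv_*+\text{h.o.t.},0)=(g(w),0)$ where $g$ collects $f$, the linear residual (which vanishes here since $v_*$ solves the linear homogeneous equation, leaving just $f$), and the superlinear remainder $R(v_*,w):=H(v_*+w)-H(v_*)-DH(v_*)w$. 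On ${_0\mE}_T$ the operator $L+(DH(v_*),0)$ is an isomorphism with norm of its inverse bounded \emph{uniformly in $T$} — this is the content of the zero-trace analysis in the proof of Lemma~\ref{freiso}, where embedding constants do not depend on the interval length. Since $R$ is superlinear (it consists of bi- and trilinear terms in $w$ with at least one $w$-factor beyond the linear one) and $\|f\|_{L^1((0,T),\FM_0)}\to0$ as $T\to0$ by absolute continuity of the integral, a standard contraction-mapping argument on a ball ${_0\mE}_T$ of fixed radius yields, for $T$ small enough, a unique $w$, hence a unique solution $u=v_*+w\in\mE_T$. The pressure gradient $\nabla p$ is recovered from the Helmholtz decomposition applied to the momentum equation, giving $\nabla p\in L^1((0,T),\FM_0(\R^n))$.

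For the maximal interval, I would define $T^*$ as the supremum of all $T>0$ for which a solution in $\mE_T$ (with $\nabla p\in L^1((0,T),\FM_0)$) exists; uniqueness on overlapping intervals — itself a consequence of the contraction argument, or of $DF$ being an isomorphism along any solution — lets us glue these into a single maximal solution $(u,p)$ on $[0,T^*)$. The blow-up alternative is the final point: suppose $T^*<\infty$ but $\limsup_{t\to T^*}\|u(t)\|_{\FM_0}=:\rho<\infty$. Then along a sequence $t_j\uparrow T^*$ one has $\|u(t_j)\|_{\FM_0}\le\rho+1$, and restarting the local existence result from initial time $t_j$ with data $u(t_j)$ gives a solution on $[t_j,t_j+\tau)$ where $\tau>0$ depends only on $\rho+1$ and $\|f\|_{L^1((t_j,\infty),\FM_0)}\le\|f\|_{L^1((0,\infty),\FM_0)}$, hence $\tau$ is bounded below independently of $j$. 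For $j$ large, $t_j+\tau>T^*$, contradicting maximality. Here the point to check carefully is that the local existence time in the first part is controlled purely by $\|u_0\|_{\FM_0}$ and $\|f\|_{L^1}$ — which it is, since in the contraction estimate the radius of the ball and the smallness threshold on $T$ depend only on these two quantities and the ($T$-independent) operator-norm bounds from Lemma~\ref{freiso}. This completes the argument; the only mildly delicate bookkeeping is tracking that all constants in the zero-trace setting are genuinely independent of the interval length, which is exactly the care already exercised in the proof of Lemma~\ref{freiso}.
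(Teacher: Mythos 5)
Your local-existence step is essentially the paper's: both hinge on Lemma~\ref{freiso} making $DF$ an isomorphism and on the inverse function theorem, with your explicit zero-trace contraction for $w=u-v_*$ being the standard way to realize the ``steer the data into the neighborhood'' step that the paper delegates to the proof of \cite[Theorem~1]{zls2016} (the paper takes $u^*=L^{-1}(f,u_0)$ as reference rather than $e^{-tA_{LF}}u_0$, an immaterial difference). Where you genuinely diverge is the blow-up alternative. The paper does \emph{not} restart with a uniform existence time: assuming $\limsup_{t\to T^*}\|u(t)\|_{\FM_0}<\infty$, it rewrites $H(u)=B(\cdot)u$ with $B(t)$ a lower-order, time-dependent \emph{linear} perturbation (bounded because $u$ is bounded in $\FM_0$), invokes maximal regularity for $L+(B(\cdot),0)$ on all of $(0,T^*)$ to conclude $u\in\mE_{T^*}\hookrightarrow\BUC([0,T^*],\FM_{0,\sigma})$, and then extends once from the limit $u(T^*)$. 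This buys independence from any quantitative control of the local existence time. Your route instead needs the local time $\tau$ to be bounded below along $t_j\uparrow T^*$; your sketch of why this holds (all contraction constants controlled by $\|u(t_j)\|_{\FM_0}$ via the $T$-independent zero-trace embedding and the smoothing estimate $\|\nabla e^{-tA_{LF}}a\|_{\FM}\lesssim t^{-1/4}\|a\|_{\FM}$) is sound, but note one imprecision: $\tau$ does not depend on $f$ only through $\|f\|_{L^1((0,\infty),\FM_0)}$, but through the modulus of uniform integrability of $t\mapsto\|f(t)\|_{\FM_0}$, i.e.\ through $\sup_s\|f\|_{L^1((s,s+\delta))}\to0$ as $\delta\to0$. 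Since $f$ is fixed this is still uniform in $j$ and your contradiction goes through, but the statement ``$\tau$ depends only on $\rho+1$ and $\|f\|_{L^1}$'' should be corrected accordingly.
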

\begin{proof}
We fix $(f,u_0)\in \mF_T$ and define a reference solution as
\[
	u^*:=L^{-1}(f,u_0)\in \mE_T.
\]
For the Fr\'echet derivative of the nonlinear operator $F\in
C^1(\mE_T,\mF_T)$ 
given in (\ref{absfulleq}) we obtain in view of Lemma~\ref{freiso}
that
\[
	DF(u^*)=L+(DH(u^*),0)\in \sLis(\mE_T,\mF_T).
\]
Utilizing the local inverse theorem, the construction of a unique local
strong solution follows now verbatim the lines of the proof
of \cite[Theorem~1]{zls2016}.

Based on the local well-posedness, as usual, we can show the 
existence of a $T^*>0$ and 
of a unique non-extendible solution $(u,p)$ on $(0,T^*)$.
For the additional property, 
suppose that $\limsup_{t\to T^*}\|u(t)\|_{\FM_0}<\infty$ and nevertheless
$T^*<\infty$. This implies
$u\in \BC([0,T^*),\FM_0)$ thanks to $u\in \mE_T$ for $T<T^*$
and embedding (\ref{etinlinf0}).  
Next, we write
\[
	H(u)(t) 
	= \left(\beta P|u(t)|^2+\lambda_0 P(u(t)\cdot\nabla) -
	Pu(t)^TA\right)u(t)
	=: B(t)u
\]
with $A=(a_{jk})_{j,k=1}^n$.
This allows for regarding (\ref{eqn:min-hyd-mod-trans}) as the 'linear'
system
\[
	(\partial_tu+A_{_{LF}}u+B(\cdot)u,u(0))=(f,u_0).
\]
By the fact that 
\[
	\|B(t)u\|_{\FM_0}\le C\|u(t)\|_{\FM_0^1}\quad (t\in (0,T^*)),
\]
we see that $B(t)$ is a lower order perturbation. It is well-known
that then maximal regularity remains true for $A_{_{LF}}+B(\cdot)$.
In fact, based on a Neumann series argument very similar as, e.g., 
in \cite[Theorem~2.5]{saal2004a} or \cite[Lemma~3]{zls2016}  
it can be proved that
\[
	L+(B(\cdot),0)\in\sLis({\mE}_{T^*},\mF_{T^*}).
\]
By the uniqueness of the solution and due to (\ref{etinlinf0}) 
this gives us 
\[
	u\in\mE_{T^*}\,\hook\, \BUC((0,T^*),\FM_{0,\sigma}).
\]
Thus $\lim_{t\to T^*}\|u(t)\|_{\FM_0}$ exists and starting from the
initial value $u(T^*)$ we can extend the solution $u$ beyond $T^*$
which contradicts its non-extendability.
\end{proof}
In the case of linear exponential stability we obtain existence of
a global mild solution for small data, i.e., a solution of the variation of
constant formula
\begin{equation}\label{vocf}
	u(t)=\exp(-tA_{d})u_0+\int_0^t \exp(-(t-s)A_{d})H(u)(s)ds,
	\quad t>0.
\end{equation}
Besides, the exponential stability transfers to the nonlinear system.
\begin{theorem}\label{mildexpst}
Assume (\ref{domparameters}) such that
$\Gamma_0 < 0$ and $4\alpha > \Gamma_0^2/\Gamma_2$, or such that
$\Gamma_0 \geq 0$ and $\alpha > 0$. 
Then there is a $\kappa>0$ such that, if
        $\|u_0\|_{\FM}<\kappa$,
there exists a unique global mild solution  
$
        u\in\BC([0,\infty),\FM_{0,\sigma}(\R^n))
$
of (\ref{vocf}) satisfying
\[
        \|u(t)\|_{\FM}\le C\e^{-\omega t}\|u_0\|_{\FM}
        \quad (t\ge0)
\]
for some $C,\omega>0$. Furthermore, recovering the pressure via
\[
	\nabla q:=-(I-P)\left[
	\lambda_0\left[u\cdot\nabla\right] u 
		+(M+\beta|u|^2)u-N(u)\right]
		\in L^1((0,T),\FM_0(\R^n))
\]
the pair $(u,\nabla q)$ is the 
unique classical solution of (\ref{eqn:min-hyd-mod-trans}). 
\end{theorem}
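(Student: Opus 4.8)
The plan is to prove Theorem~\ref{mildexpst} by a standard fixed point argument for the variation of constants formula \eqref{vocf}, combined with the exponential decay estimates for the semigroup $(\exp(-tA_d))_{t\ge0}$ coming from Proposition~\ref{disorderedstate_lin_stability}(1), and then to bootstrap from mild to classical solution using the smoothing of the analytic semigroup together with the local strong solvability of Proposition~\ref{locstrongsol}.

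\medskip\noindent\textbf{Step 1: Decay estimates for the linear semigroup.}
Under the standing hypotheses we are in the exponentially stable regime, so by Proposition~\ref{disorderedstate_lin_stability}(1) and the fact that $\omega(-A_d)=s(-A_d)<0$ there exist $C,\omega>0$ with $\|\exp(-tA_d)\|_{\sL(\FM_{0,\sigma})}\le C\e^{-\omega t}$ for $t\ge0$. Since $-A_d$ generates an analytic semigroup (sectoriality, Proposition~\ref{op_alf_calculus}) and fractional powers $(\omega+A_d)^{1/4}$ are available with $D((\omega+A_d)^{1/4})=\FM_0^1\cap\FM_{0,\sigma}$ by Corollary~\ref{op_alf_interpolation}, one also has the smoothing estimate
\[
	\|(\omega+A_d)^{1/4}\exp(-tA_d)\|_{\sL(\FM_{0,\sigma})}\le C t^{-1/4}\e^{-\omega' t},\qquad t>0,
\]
for some $\omega'\in(0,\omega)$. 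I would read off from this that $\exp(-tA_d)$ maps $\FM_{0,\sigma}$ into $\FM_0^1\cap\FM_{0,\sigma}$ with norm $\lesssim t^{-1/4}\e^{-\omega' t}$.

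\medskip\noindent\textbf{Step 2: Fixed point in a weighted space.}
Fix $\omega''\in(0,\omega')$ and work in the complete metric space
\[
	\mX_\rho:=\Bigl\{u\in\BC([0,\infty),\FM_{0,\sigma})\cap\BC((0,\infty),\FM_0^1):\ \|u\|_{\mX}:=\sup_{t\ge0}\e^{\omega'' t}\|u(t)\|_{\FM}+\sup_{t>0}t^{1/4}\e^{\omega'' t}\|u(t)\|_{\FM^1}\le\rho\Bigr\}.
\]
For $u\in\mX_\rho$ set $\Phi(u)(t):=\exp(-tA_d)u_0+\int_0^t\exp(-(t-s)A_d)H(u)(s)\,\dd s$. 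Using the algebra property Lemma~\ref{fm_lem2}(1), the nonlinearity $H(u)=\beta P|u|^2u+\lambda_0P(u\cdot\nabla)u-PN(u)$ satisfies $\|H(u)(s)\|_{\FM}\le C(\|u(s)\|_{\FM}\|u(s)\|_{\FM^1}+\|u(s)\|_{\FM}^2\|u(s)\|_{\FM^1})$, hence $\|H(u)(s)\|_{\FM}\le C(\rho^2+\rho^3)s^{-1/4}\e^{-2\omega'' s}$. Plugging this into the Duhamel term and using the Step~1 estimates, the convolution integrals $\int_0^t\e^{-\omega'(t-s)}s^{-1/4}\e^{-2\omega'' s}\dd s$ and $\int_0^t(t-s)^{-1/4}\e^{-\omega'(t-s)}s^{-1/4}\e^{-2\omega'' s}\dd s$ are each bounded by $C\e^{-\omega'' t}$ (resp.\ $Ct^{-1/4}\e^{-\omega'' t}$) by splitting at $t/2$ and using that a Beta-type integral converges (the two exponents $1/4$ sum to $1/2<1$). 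This gives $\|\Phi(u)\|_{\mX}\le C_0\|u_0\|_{\FM}+C_1(\rho^2+\rho^3)$, and a parallel computation for the difference $\Phi(u)-\Phi(\bar u)$ using bilinearity/trilinearity of $H$ gives $\|\Phi(u)-\Phi(\bar u)\|_{\mX}\le C_1(\rho+\rho^2)\|u-\bar u\|_{\mX}$. Choosing first $\rho$ so small that $C_1(\rho+\rho^2)\le1/2$ and then $\kappa:=\rho/(2C_0)$, the map $\Phi$ is a contraction on $\mX_\rho$ whenever $\|u_0\|_{\FM}<\kappa$; its unique fixed point is a global mild solution with the asserted decay $\|u(t)\|_{\FM}\le\rho\,\e^{-\omega'' t}\le C\e^{-\omega'' t}\|u_0\|_{\FM}$ (relabelling $\omega:=\omega''$).

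\medskip\noindent\textbf{Step 3: From mild to classical.}
It remains to identify $(u,\nabla q)$ with the strong/classical solution. The solution $u$ from Step~2 has $u(t)\in\FM_0^1$ for $t>0$ with locally bounded norm; feeding this into Proposition~\ref{locstrongsol} with initial time $t_0>0$ and initial value $u(t_0)\in\FM_{0,\sigma}$ produces a maximal strong solution, which by uniqueness of mild solutions (a mild solution is unique and both agree) coincides with $u$ on its interval of existence. The blow-up alternative of Proposition~\ref{locstrongsol} together with $\sup_{t\ge0}\|u(t)\|_{\FM}\le\rho<\infty$ forces $T^*=\infty$, so $u\in\mE_T$ for every $T>0$, i.e.\ $u\in W^{1,1}((0,T),\FM_{0,\sigma})\cap L^1((0,T),\FM_0^4)$ and hence satisfies \eqref{eqn:min-hyd-mod-trans} in the strong sense. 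Finally the pressure gradient $\nabla q=-(I-P)[\lambda_0(u\cdot\nabla)u+(M+\beta|u|^2)u-N(u)]$ lies in $L^1((0,T),\FM_0)$ because $(I-P)$ is bounded on $\FM_0$ (Proposition~\ref{fmfouriermult}) and the bracket is in $L^1((0,T),\FM_0)$ by the nonlinearity estimates from the Lemma preceding Lemma~\ref{freiso}; this makes $(u,\nabla q)$ the unique classical solution of \eqref{eqn:min-hyd-mod-trans}.

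\medskip\noindent\textbf{Main obstacle.}
The delicate point is Step~2: one must set up the weighted space so that the exponential factor $\e^{\omega'' t}$ survives the Duhamel convolution while the algebraic singularity $t^{-1/4}$ from the $\FM^1$-smoothing remains integrable. This requires choosing the decay rate $\omega''$ strictly below the linear rate $\omega'$ (so the convolution of two exponentials again decays like $\e^{-\omega'' t}$), and carefully combining the two time-weights so that the quadratic/cubic estimates on $H$ close — in particular noting that $H$ needs \emph{one} derivative of $u$ and that $\|u\|_{\FM}\|u\|_{\FM^1}$, not $\|u\|_{\FM^1}^2$, is what appears, which is exactly what keeps the Beta integral exponent below $1$. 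Everything else is routine given the $H^\infty$-calculus, the interpolation identity, and the maximal-regularity machinery already established.
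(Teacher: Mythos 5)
Your proposal is correct and follows essentially the same route as the paper, which omits the proof and simply refers to the Kato-type iteration of \cite{gims2007}; your weighted fixed-point scheme in $\sup_t\e^{\omega'' t}\|\cdot\|_{\FM}+\sup_t t^{1/4}\e^{\omega'' t}\|\cdot\|_{\FM^1}$ with the $t^{-1/4}$ semigroup smoothing is precisely that argument, and the passage to a classical solution via Proposition~\ref{locstrongsol} and the blow-up alternative matches the intended reasoning. One small touch-up: to obtain $\|u(t)\|_{\FM}\le C\e^{-\omega t}\|u_0\|_{\FM}$ (rather than merely $\rho\,\e^{-\omega t}$) you should close the a priori bound as $\|u\|_{\mX}\le C_0\|u_0\|_{\FM}+C_1(\rho+\rho^2)\|u\|_{\mX}$, which with $C_1(\rho+\rho^2)\le 1/2$ yields $\|u\|_{\mX}\le 2C_0\|u_0\|_{\FM}$ and hence the stated decay with constant proportional to $\|u_0\|_{\FM}$.
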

\begin{proof}
The proof is very analogous to the proof of 
\cite[Theorem~1.2 and 1.3]{gims2007} and is hence omitted.
\end{proof}

\subsection{Nonlinear turbulence}\label{subsec_stab}

Most of the outcome on linear (in-) stability in the $\FM$-setting 
transfers to the corresponding
nonlinear situation. The transfer of turbulence
follows by principles on linearized instability. Here we apply 
\cite[Corollary 5.1.6]{henry}.
\begin{lemma}\label{nonlinearregularity}
Consider the nonlinearity $H$ given in (\ref{defnonlins}). 
Then we have $H \in C^1(\FM^\eta(\R^n), \FM_{0, \sigma}(\R^n))$ 
for $\eta \ge 1$ and the estimate 
\[
    \| H(u) \|_{\FM} \leq C \| u \|^2_{\FM^\eta} 
    \qquad (\|u\|_{\FM^{\eta}} \leq 1).
\]
\end{lemma}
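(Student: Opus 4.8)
The statement has two parts: a $C^1$-regularity claim for $H$ regarded as a map between fixed Banach spaces, and a quadratic-type estimate near the origin. Both follow from the bilinear/trilinear algebraic structure of $H$ together with the Banach-algebra property of $\FM_0$ (Lemma~\ref{fm_lem2}(1)) and the embedding $\FM^\eta(\R^n)\hookrightarrow\FM_0^1(\R^n)$ for $\eta\ge 1$. I would begin by recording that, for $\eta\ge 1$, multiplication by $\partial^\alpha$ with $|\alpha|\le 1$ maps $\FM^\eta$ into $\FM_0$ boundedly, since $\widehat{\partial^\alpha u}=\hat u\lfloor(i\xi)^\alpha$ and $|\xi|^{|\alpha|}\le 1+|\xi|^\eta$ pointwise (for $\eta\ge 1$), so $\|\partial^\alpha u\|_{\FM}\le C\|u\|_{\FM^\eta}$; the same reasoning with $\alpha=0$ gives $\|u\|_{\FM_0}\le\|u\|_{\FM^\eta}$. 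Combined with the estimates from the previous lemma ($\|(u\cdot\nabla)u\|_{\FM}\le C\|u\|_{\FM_0}\|\nabla u\|_{\FM}$, etc.), this yields
\[
\|H(u)\|_{\FM}\le C\bigl(\|u\|_{\FM_0}^2\|u\|_{\FM^1_0}+\|u\|_{\FM_0}\|u\|_{\FM^1_0}\bigr)\le C\bigl(\|u\|_{\FM^\eta}^3+\|u\|_{\FM^\eta}^2\bigr),
\]
which reduces to $C\|u\|_{\FM^\eta}^2$ on the unit ball $\{\|u\|_{\FM^\eta}\le 1\}$, proving the estimate. Boundedness of $P$ on $\FM_0$ (Proposition~\ref{fmfouriermult}) ensures the target space is $\FM_{0,\sigma}$; one should also note divergence-freeness, which holds because $P$ projects onto $\FM_{0,\sigma}$.

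For the $C^1$-claim, I would exploit that $H$ is a finite sum of a trilinear form ($\beta P|u|^2u$), bilinear forms ($\lambda_0 P(u\cdot\nabla)u$ and $PN(u)$), all built from bounded multiplication maps $\FM^\eta\times\FM^\eta\to\FM_0$ and $\FM^\eta\times\FM^\eta\times\FM^\eta\to\FM_0$ (the latter via two applications of Lemma~\ref{fm_lem2}(1)). A continuous multilinear map between Banach spaces is automatically $C^\infty$, with derivative given by the symmetrized partial evaluations; in particular $H\in C^1(\FM^\eta(\R^n),\FM_{0,\sigma}(\R^n))$ with
\[
DH(v)u=\beta P|v|^2u+2\beta P(u\cdot v)v+\lambda_0 P(u\cdot\nabla)v+\lambda_0 P(v\cdot\nabla)u-2P\sum_{j,k=1}^n a_{jk}(u^jv^k+u^kv^j),
\]
exactly as in the preceding lemma but now with the spaces $\mE_T$ replaced by the stationary space $\FM^\eta$. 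The only point requiring a word is that the derivative map $v\mapsto DH(v)$ is continuous from $\FM^\eta$ into $\sL(\FM^\eta,\FM_{0,\sigma})$, which again is immediate from multilinearity and the algebra estimates.

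\textbf{Main obstacle.} There is no serious obstacle here; the one technical nicety is verifying that differentiation in $\xi$ is not needed — only the weighted total-variation control of $\hat u$ — so that the loss of one derivative in $(u\cdot\nabla)u$ is absorbed by $\FM^\eta$ with $\eta\ge 1$ (and not $\eta\ge 1+\varepsilon$), which is why the exponent threshold is exactly $1$. I would make sure the embedding inequality $|\xi|\le 1+|\xi|^\eta$ is invoked explicitly so that the constant $C$ in the final estimate is seen to be uniform in $u$ on the unit ball. Everything else is the standard fact that bounded multilinear maps are smooth.
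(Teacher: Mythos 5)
Your proposal is correct and follows essentially the same route as the paper: estimate each bi-/trilinear term via the algebra property of $\FM_0$ (Lemma~\ref{fm_lem2}(1)) together with $\|\nabla u\|_{\FM}\le C\|u\|_{\FM^\eta}$ for $\eta\ge 1$, restrict to the unit ball to absorb the cubic term into the quadratic bound, and deduce $C^1$ (indeed smoothness) from the boundedness of the underlying multilinear maps. The extra details you supply (the explicit formula for $DH(v)u$ and the continuity of $v\mapsto DH(v)$) are consistent with the paper, which leaves them implicit.
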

\begin{proof}
Using the algebra property of $\FM_0(\R^n)$ in 
Lemma~\ref{fm_lem2}(1) we obtain
\begin{align*}
    \| (u \cdot \nabla)u \|_\FM 
    &\leq C \| u \|_\FM \| \nabla u \|_\FM 
    \leq C \|u\|_\FM \|u\|_{\FM^\eta},\\
    \| |u|^2 u \|_\FM &\leq C \| u \|^3_\FM,\\
    \| N(u) \|_\FM &\leq C \| u \|_\FM^2,
\end{align*}
and the claimed estimate follows for $u \in \FM^\eta(\R^n)$ 
with $\|u\|_{\FM^\eta} \le 1$. The estimates also prove
$H \in C^1(\FM^\eta(\R^n), \FM_{0, \sigma}(\R^n))$, 
since $H$ consists of bi- and trilinear terms.
\end{proof}
First, we again examine the (in-) stability of 
the disordered state (\ref{disorderedstate}).
\begin{theorem}\label{disorderedstate_nonlin_stability}
Assume (\ref{domparameters}).
Then the disordered state (\ref{disorderedstate}) 
is nonlinearly
\begin{enumerate}
    \item exponentially stable in 
    $\FM_{0,\sigma}(\R^n)$ 
    if $\Gamma_0 < 0$ and 
    $4\alpha > \Gamma_0^2/\Gamma_2$, or if 
    $\Gamma_0 \geq 0$ and $\alpha > 0$;
    \item unstable in 
    $\FM^{4\gamma}(\R^n) \cap \FM_{0, \sigma}(\R^n)$ for 
    $\gamma \in [1/4,1)$ if $\Gamma_0 < 0$ and 
    $4\alpha < \Gamma_0^2/\Gamma_2$, or if 
    $\Gamma_0 \geq 0$ and $\alpha < 0$.
\end{enumerate}
\end{theorem}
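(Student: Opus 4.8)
The plan is to deduce both assertions from the corresponding linear results in Proposition~\ref{disorderedstate_lin_stability} together with the abstract linearization principles, using that the nonlinearity $H$ is well-behaved on the interpolation scale identified in Corollary~\ref{op_alf_interpolation}. For part~(1), I would argue that this is essentially a restatement of Theorem~\ref{mildexpst}: under the stated parameter conditions Proposition~\ref{disorderedstate_lin_stability}(1) gives exponential stability of $(\exp(-tA_d))_{t\ge0}$ on $\FM_{0,\sigma}(\R^n)$, i.e.\ $\|\exp(-tA_d)\|\le C\e^{-\omega t}$ for some $\omega>0$, and Lemma~\ref{nonlinearregularity} provides the quadratic estimate $\|H(u)\|_\FM\le C\|u\|_{\FM^\eta}^2$ for $\|u\|_{\FM^\eta}\le1$ (with $\eta\ge1$, in particular $\eta=4\gamma$ admissible). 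Feeding these two ingredients into the standard fixed-point scheme for the variation-of-constants formula \eqref{vocf} — exactly as carried out in \cite{gims2007} and invoked in the proof of Theorem~\ref{mildexpst} — yields a unique small global mild solution with the asserted exponential decay, which is precisely nonlinear exponential stability in $\FM_{0,\sigma}(\R^n)$.

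For part~(2), the instability claim, I would apply the principle of linearized instability in the form of \cite[Corollary 5.1.6]{henry}. The setup required by that result is: a sectorial operator $A_d$ on the base space $X:=\FM_{0,\sigma}(\R^n)$ whose spectrum has a point with positive real part (after the reflection $-A_d$, equivalently $\sigma(A_d)$ meets the open left half-plane in the relevant convention — here the crucial fact is that under $\Gamma_0<0,\ 4\alpha<\Gamma_0^2/\Gamma_2$, or $\Gamma_0\ge0,\ \alpha<0$, one has by \eqref{intersec_points} and Proposition~\ref{disorderedstate_lin_stability}(3) that $\inf_{\xi}\Re\sigma_{A_d}(\xi)<0$, so the semigroup $\exp(-tA_d)$ has positive growth bound); and a nonlinearity defined and $C^1$ on a fractional power space $X^\gamma=D((\omega+A_d)^\gamma)$ with a superlinear estimate near $0$. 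By Corollary~\ref{op_alf_interpolation} we have $X^\gamma=\FM_0^{4\gamma}(\R^n)\cap\FM_{0,\sigma}(\R^n)$, and by Lemma~\ref{nonlinearregularity} the map $H$ is $C^1$ on $\FM^{4\gamma}$ for $4\gamma\ge1$, i.e.\ $\gamma\ge1/4$, with $\|H(u)\|_X\le C\|u\|_{X^\gamma}^2$ for small $u$; in particular $DH(0)=0$, so $A_d$ really is the full linearization at the steady state $u\equiv0$. All hypotheses of \cite[Corollary 5.1.6]{henry} are thus met, and it furnishes a sequence of solutions starting arbitrarily close to $0$ in $X^\gamma=\FM^{4\gamma}(\R^n)\cap\FM_{0,\sigma}(\R^n)$ that leave a fixed neighborhood, which is the asserted instability.

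The main obstacle I anticipate is the bookkeeping needed to put the problem into the exact abstract framework of \cite{henry}: verifying that the spectral gap condition there (existence of spectrum strictly to the right of some vertical line, with the rest of the spectrum separated) genuinely holds for $-A_d$ in the $\FM_{0,\sigma}$-setting, rather than merely $s(-A_d)>0$. Because $A_d$ is a Fourier multiplier with symbol $\sigma_{A_d}(\xi)=\Gamma_2|\xi|^4+\Gamma_0|\xi|^2+\alpha$, its spectrum on $\FM_{0,\sigma}$ is the closure of the range of this symbol (by Proposition~\ref{fmfouriermult}), which is a half-line $[\min_s(\Gamma_2 s^2+\Gamma_0 s+\alpha),\infty)$ in $s=|\xi|^2\ge0$; under the instability conditions this minimum is negative, so $\sigma(-A_d)\supset(0,-\min]$ — there is genuinely spectrum with positive real part, uniformly bounded away from the imaginary axis by $|\min|>0$, and the decomposition into an "unstable" part and the rest is clean. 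Once this spectral picture is spelled out, the remaining verifications (sectoriality from Proposition~\ref{op_alf_calculus}, the interpolation identification, the nonlinear estimate) are already in hand, and the cited corollary applies directly; the proof is then short.
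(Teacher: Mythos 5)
Your proposal is correct and follows essentially the same route as the paper: part (1) is read off from Theorem~\ref{mildexpst}, and part (2) applies the linearized instability principle \cite[Corollary 5.1.6]{henry} using Proposition~\ref{disorderedstate_lin_stability}, Corollary~\ref{op_alf_interpolation} and Lemma~\ref{nonlinearregularity} with $\eta=4\gamma\ge1$. Your explicit verification of the spectral picture via the range of the symbol is a welcome extra detail; the only point the paper adds that you omit is the preliminary observation that a finite maximal existence time $T^*<\infty$ already forces instability by the blow-up alternative of Proposition~\ref{locstrongsol}, so that one may assume $T^*=\infty$ before invoking Henry's result.
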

\begin{proof}
(1) is an immediate consequence of Theorem~\ref{mildexpst}.

For (2) first observe that $T^* < \infty$ implies that $u \equiv 0$ 
is unstable, since $\limsup_{t \to \infty} \|u(t)\|_{\FM_0} = \infty$ 
by Proposition~\ref{locstrongsol}. So, w.l.o.g. we can assume $T^* = \infty$.
From Proposition~\ref{disorderedstate_lin_stability} we have 
that $\sigma(-A_d) \cap \{z \in \C : \Re z > 0 \} \neq \emptyset$. 
Thanks to Corollary~\ref{op_alf_interpolation} and
Lemma~\ref{nonlinearregularity} with $\eta=4\gamma\ge 1$ 
we can apply \cite[Corollary 5.1.6]{henry} and the assertion follows.
(In the notation of \cite{henry} we have 
$x_0 = 0$, $A = A_d$, $B = 0$, $f(u)= g(u)= H(u)$, $\alpha=\gamma$, $p=2$.) 
\end{proof}
We obtain a similar result on instability 
of the ordered polar state (\ref{orderedstate}).
\begin{theorem}\label{orderedstate_nonlin_stability}
Let $\Gamma_2 > 0$, $\beta > 0$ and $\Gamma_0, \alpha < 0$. 
Then the ordered polar state (\ref{orderedstate}) is nonlinearly 
unstable in $\FM^{4\gamma}(\R^n) \cap \FM_{0, \sigma}(\R^n)$ for 
$\gamma \in [1/4,1)$.
\end{theorem}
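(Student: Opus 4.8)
The plan is to mimic the proof of Theorem~\ref{disorderedstate_nonlin_stability}(2), replacing the disordered-state operator $A_d$ by the ordered-state operator $A_0$ and checking that all the structural ingredients needed for \cite[Corollary~5.1.6]{henry} are still in place. First I would invoke Proposition~\ref{locstrongsol} to get, for data $u_0$ near $V$ (equivalently, for the shifted variable near $0$), a unique maximal strong solution on a maximal interval $(0,T^*)$; as in the disordered case, if $T^*<\infty$ then $\limsup_{t\to T^*}\|u(t)\|_{\FM_0}=\infty$, which already gives instability, so one may assume $T^*=\infty$. Since the operator $A_{LF}$ is exactly the same abstract object in both cases (only the symbol $\sigma_{A_{LF}}$ changes through $V$, $M$, $N$), Proposition~\ref{op_alf_calculus}, Corollary~\ref{op_alf_interpolation} and Lemma~\ref{nonlinearregularity} apply verbatim with $A_0$ in place of $A_d$; in particular $\omega+A_0$ has a bounded $H^\infty$-calculus with angle $<\pi/2$ on $\FM_{0,\sigma}(\R^n)$, the fractional domain spaces are the interpolation spaces $[\FM_{0,\sigma},D(A_0)]_\gamma=\FM_0^{4\gamma}\cap\FM_{0,\sigma}$, and the nonlinearity $H$ (now with the ordered-state values (\ref{valuesos}) of $M$ and $N$) lies in $C^1(\FM^\eta,\FM_{0,\sigma})$ with the quadratic bound $\|H(u)\|_\FM\le C\|u\|_{\FM^\eta}^2$ for $\|u\|_{\FM^\eta}\le 1$, $\eta=4\gamma\ge 1$.

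The decisive point is the spectral hypothesis of \cite[Corollary~5.1.6]{henry}: one needs $\sigma(-A_0)\cap\{\Re z>0\}\neq\emptyset$. This is supplied by Proposition~\ref{orderedstate_lin_stability}(1), whose proof already exhibits, for $\Gamma_0<0$, unit vectors $x\in\{V\}^\perp$ and wave numbers $\xi\in\{V\}^\perp$ with $|\xi|$ small such that $x^T\sigma_{A_0}(\xi)x=\Gamma_2|\xi|^4+\Gamma_0|\xi|^2<0$; since $\sigma_P(\xi)VV^T$ annihilates $\{V\}^\perp$ and $V\cdot\xi=0$ on this subspace, $\sigma_{A_0}(\xi)$ restricted to $\{V\}^\perp$ has $x$ as a genuine eigenvector with negative eigenvalue, so $-\sigma_{A_0}(\xi)$ has a positive eigenvalue, and hence $\sigma(-A_0)$ meets the open right half-plane. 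With $\Gamma_0,\alpha<0$ the hypotheses of Proposition~\ref{orderedstate_lin_stability}(1) are met, so this step goes through unchanged.

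Having assembled these pieces I would then apply \cite[Corollary~5.1.6]{henry} directly, exactly as in the proof of Theorem~\ref{disorderedstate_nonlin_stability}(2): in the notation of \cite{henry} take $x_0=0$, $A=A_0$, $B=0$, $f(u)=g(u)=H(u)$, $\alpha=\gamma$, $p=2$; the dissipativity/sectoriality from Proposition~\ref{op_alf_calculus}, the identification of the interpolation space $X^\gamma=\FM^{4\gamma}_0\cap\FM_{0,\sigma}$ from Corollary~\ref{op_alf_interpolation}, the nonlinear estimate from Lemma~\ref{nonlinearregularity}, and the spectral gap above yield nonlinear instability of the zero solution of the shifted system (\ref{eqn:min-hyd-mod-trans}) in $\FM^{4\gamma}(\R^n)\cap\FM_{0,\sigma}(\R^n)$ for $\gamma\in[1/4,1)$, which is precisely nonlinear instability of the ordered polar state (\ref{orderedstate}) for the original system. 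I expect no serious obstacle here, since the ordered-state linear operator and nonlinearity differ from the disordered-state ones only in lower-order, bounded ways; the only point demanding a line of care is confirming that the eigenvalue produced in Proposition~\ref{orderedstate_lin_stability} is a true eigenvalue of the matrix symbol (not merely a point where a quadratic form is negative) so that it indeed contributes a point of $\sigma(-A_0)$ in the right half-plane — and this is immediate from the orthogonal-decomposition structure of $\sigma_{A_0}(\xi)$ on $\{V\}^\perp$.
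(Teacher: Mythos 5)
Your proposal is correct and follows essentially the same route as the paper: the paper's proof of Theorem~\ref{orderedstate_nonlin_stability} is a one-line application of \cite[Corollary 5.1.6]{henry} citing exactly the three ingredients you assemble, namely Corollary~\ref{op_alf_interpolation}, Proposition~\ref{orderedstate_lin_stability}(1), and Lemma~\ref{nonlinearregularity}. Your additional care in verifying that the negative value of the quadratic form $x^T\sigma_{A_0}(\xi)x$ corresponds to a genuine eigenvalue of the symbol on $\{V\}^\perp$ (so that $\sigma(-A_0)$ really meets the open right half-plane) is a worthwhile elaboration of a point the paper leaves implicit, but it does not change the argument.
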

\begin{proof}
Also here the assumptions of \cite[Corollary 5.1.6]{henry} 
are fulfilled thanks to Corollary~\ref{op_alf_interpolation},
Proposition~\ref{orderedstate_lin_stability}(1) and
Lemma~\ref{nonlinearregularity}. 
\end{proof}

\section{Conclusion}

We gave an analytical approach to the active fluid model
proposed by Wensink et al.\ \cite{Wensink-et-al:Meso-scale-turbulence} in 
the $\FM(\R^n)$-setting, i.e., in spaces of Fourier transformed Radon
measures.
In detail we have proved: 
\begin{itemize}
\item[(i)] existence of a unique maximal strong solution for
arbitrary data and existence of a unique global mild (classical) solution
for small data in case of linear exponential stability;
\item[(ii)] results on linear and nonlinear 
stability and instability of the ordered
and the disordered steady states in the $\FM(\R^n)$-setting, 
depending on the values of the occuring physically relevant parameters.
\end{itemize}
By the fact that wave modes belong to $\FM(\R^n)$ (not to $L^2(\R^n)$)
this justifies the typical formal stability analysis based on wave modes
\cite{Wensink-et-al:Meso-scale-turbulence}.
It also justifies mathematically the asymptotic
behavior observed in simulations and experiments
\cite{Wensink-et-al:Meso-scale-turbulence,Kaiser1,Dogic,Doostmohammadi,Beppu,Sagues}, 
in particular
meso-scale turbulence caused by self-propulsion.

\end{document}